\documentclass[10pt]{article}

\addtolength{\oddsidemargin}{-.875in}
\addtolength{\evensidemargin}{-.875in}
\addtolength{\textwidth}{1.75in}
\addtolength{\topmargin}{-.5in}
\addtolength{\textheight}{1in}

\usepackage{amsmath,amsthm,amssymb}
\usepackage{mathrsfs}
\usepackage{mathtools}
\usepackage{thmtools} 
\usepackage{cancel}
\usepackage{proof}    
\usepackage{enumitem}

\usepackage[hidelinks]{hyperref}
\usepackage{graphicx, xcolor} 
\usepackage{booktabs} 
\usepackage{titlesec} 
\usepackage{sidecap}
\usepackage[font=small, labelfont=bf, labelsep=period]{caption}

\usepackage{tikz}
\tikzset{every picture/.style={line width=0.5pt}} 

\font\bigbold=cmbx12

\font\smallheader=cmssbx10 


\def\maketitle#1#2#3#4{
  \centerline {\bigbold #1}
  \medskip
  \centerline {\eightpt #2}
  \medskip
  \centerline {\tensc #3}
  \medskip
  \centerline {\sl #4}
  \bigskip
}


\titleformat{\section}{\smallheader}{\thesection.}{0.5em}{}
\titlespacing{\section}{0em}{1.5\bigskipamount}{\medskipamount}
\titleformat{\subsection}{\smallheader}{\thesubsection.}{0.5em}{}
\titlespacing{\subsection}{0em}{\bigskipamount}{\medskipamount}
\titleformat{\paragraph}[runin]{\bf}{}{0em}{}[.]
\titlespacing{\paragraph}{0em}{\medskipamount}{*0.75}


\font\tensc=cmcsc10

\font\eightpt=cmr8




\def\case#1. {{\it Case #1}.\enspace}

\def\parlabel#1. {\medskip\noindent{\bf #1.}\enspace}

\def\sqr#1#2{{\vcenter{\vbox{\hrule height.#2pt
        \hbox{\vrule width.#2pt height#1pt \kern#1pt
          \vrule width.#2pt}
        \hrule height.#2pt}}}}

\def\slug{\quad\hbox{\kern1.5pt\vrule width2.5pt height6pt depth1.5pt\kern1.5pt}\medskip}

\setlist[enumerate]{itemsep=\smallskipamount,parsep=0pt,label={\rm \roman*)}}
\setlist[itemize]{itemsep=\smallskipamount,parsep=0pt}


\newtheorem{thm}{Theorem}
\newtheorem{cor}[thm]{Corollary}

\newtheorem{lem}[thm]{Lemma}

\theoremstyle{definition}

\theoremstyle{remark}

\DeclareMathOperator*{\argmax}{arg\;max}

\DeclareMathOperator{\HS}{H}

\DeclareMathOperator{\Fr}{F}
\DeclareMathOperator{\Can}{C}
\DeclareMathOperator{\Rig}{R}

\newcommand{\II}{\mathbb{II}}
\newcommand{\III}{\mathbb{III}}

\newcommand{\ind}[1]{\1_{[#1]}}
\newcommand{\bigind}[1]{\1_{\left[#1\right]}}

\newcommand{\disteq}{\stackrel{\L}{=}}


\newcommand{\E}{\mathbb{E}}
\newcommand{\I}{\mathbb{I}}

\newcommand{\N}{\mathbb{N}}
\renewcommand{\P}{\mathbb{P}}

\newcommand{\R}{\mathbb{R}}

\usepackage{bbm}
\newcommand{\1}{\mathbbm{1}}


\newcommand{\K}{\mathcal{K}}
\renewcommand{\L}{\mathcal{L}}

\renewcommand{\O}{\mathcal{O}}

\newcommand{\V}{\mathcal{V}}


\newcommand{\ad}{\text{ and }}


\newcommand{\ceq}{\coloneqq} 



\newcommand{\e}{\varepsilon}
\newcommand{\de}{\delta}






\DeclarePairedDelimiterX{\inp}[2]{\langle}{\rangle}{#1, #2} 

\newcommand{\bpar}[1]{\left(#1\right)}

\newcommand{\bcurly}[1]{\left\{#1\right\}}



\newcommand{\no}{\noindent}

\newcommand{\bs}{\baselineskip}

\usepackage{calligra}
    \DeclareMathAlphabet{\mathcalligra}{T1}{calligra}{m}{n}
    \DeclareFontShape{T1}{calligra}{m}{n}{<->s*[2.2]callig15}{}


\renewcommand{\K}{\mathrm{K}}

\renewcommand{\mathbb}{\mathbf}
\renewcommand{\mathbbm}{\mathbf}
\renewcommand{\V}{\mathbf{V}}
\renewcommand{\e}{\epsilon}


\begin{document}

\maketitle{The Horton-Strahler Number of Conditioned Galton-Watson Trees}{}{Anna Brandenberger, Luc Devroye and Tommy Reddad}{School of Computer Science, McGill University}

\medskip

\[
  \vbox{
    \hsize 5.5 true in
    \noindent{\bf Abstract.}\enskip The Horton-Strahler number of a tree is a measure of its branching complexity; it is also known in the literature as the register function. We show that for critical Galton-Watson trees with finite variance conditioned to be of size $n$, the Horton-Strahler number grows as $\frac{1}{2}\log_2 n$ in probability. We further define some generalizations of this number. Among these are the \textit{rigid} Horton-Strahler number and the $k$-ary register function, for which we prove asymptotic results analogous to the standard case.
    
    \smallskip 
    \noindent{\bf Keywords.}\enskip Register function, Horton-Strahler number, Galton-Watson trees, branching processes, probabilistic analysis. 
  }
\]

\smallskip

\section{Introduction}\label{sec:intro}

\no 
\textsc{Rooted trees, i.e., connected} acyclic graphs with one node distinguished as the root, are one of the most important structures in graph theory and computer science. Many possible functions can be defined on them, one of which is the Horton-Strahler number. It was originally conceived by geologists to classify real-world river networks and has since then been applied in multiple fields; for instance, it is known as the register function in computer science. The study of its asymptotics for various families of trees has seen considerable attention.

\paragraph{The Horton-Strahler number} For a node $u$ in a rooted tree $T$, let $T[u]$ denote the subtree of $T$ rooted at $u$. We can then recursively define the \emph{Horton-Strahler number} $\HS(T)$ of a tree $T$ as follows: 
\begin{enumerate} 
    \item if the root has no children and $|T| = 1$, then $\HS(T) = 0$,
    \item otherwise, letting $S$ be the set of children of the root, the Horton-Strahler number of the tree takes on the maximum of the Horton-Strahler numbers of the subtrees rooted at children of the root, plus one if two or more children attain the same maximal Horton-Strahler number:
    \begin{equation}\label{eq:hsdef}
        \HS(T) = \max\{\HS(T[u]): u \in S\} + \bigind{|\argmax_{u \in S} \HS(T[u]) | > 1 }.
    \end{equation}
\end{enumerate}

\paragraph{Background} The Horton-Strahler number was introduced in 1845 by Robert E.\ Horton~\cite{horton} and redefined by Arthur N.\ Strahler~\cite{strahler} in the context of hydrogeomorphology. This field represents a river network as a tree with a planar embedding, where the point furthest downstream corresponds to the root and junctions between two streams correspond to nodes in the tree. In his original work~\cite{horton}, Horton described a geometric decay of the number of branches of increasing Horton-Strahler order in a large river basin. Empirical findings from classical geological studies showed that in fact, many other key physical characteristics of river networks (e.g., basin area, stream width and length, flow velocity, etc.) can be modelled using the Horton-Strahler number~\cite{rodriguez1992power,rodriguez2001fractal}. 

\begin{SCfigure}[1.5][hbtp]  
    \centering
    \input{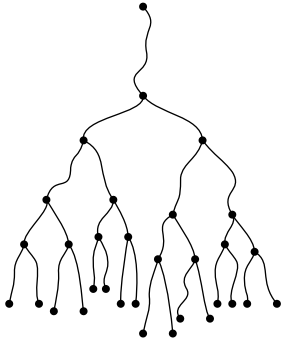}
    \caption{A visualization of a tree with Horton-Strahler number 4, where all nodes in the tree are collapsed into edges, other than the root and nodes forming an embedded complete binary tree. In fact, the Horton-Strahler number of a tree is \textit{equal to} the height of the largest embedded complete binary tree.}\label{fig:T-completebinary}
\end{SCfigure}

In computer science, the Horton-Strahler number is known as the register function or register number~\cite{ershov1958programming}, modulo the value at the leaf. It is equal to $\HS(T) + 1$ and corresponds to the minimum number of \textsc{cpu} registers needed to evaluate an expression tree. The probability and theoretical computer science communities have mostly devoted their attention to the register function of random equiprobable binary trees --- Catalan trees.
Already in 1966, Shreve~\cite{shreve} made some conjectures about its value based on simulations in a random topology model equivalent to a uniform distribution on planar binary trees.
Flajolet et.\ al.~\cite{flajolet}, Kemp~\cite{kemp1979average} and Meir et.\ al.~\cite{meir-moon-pounder} independently found the register function of a Catalan tree with $n$ leaves to be $\log_4 n + O(1)$. Later, Devroye and Kruszewski~\cite{krusz-paper} offered a simple probabilistic proof of this result. As for other families of trees, Flajolet and Prodinger showed similar asymptotics for Motzkin trees~\cite{flajolet1986register}.

Many quantities related to the Horton-Strahler number have been studied, mostly in the Catalan tree setting. 
Moon and others investigated the behaviour of the bifurcation ratio, i.e., ratio of number of branches with successive Horton-Strahler numbers~\cite{moon, wang1991large, yekutieli1994horton}.
The Horton-Strahler numbers have also been related to the self-similar (fractal) structure of trees. They are connected to the Horton pruning operation, which iteratively erases a tree; Burd et.\ al.~\cite{burd2000self} studied this pruning operation for critical binary Galton-Watson trees. Other references on this topic can be found in the review by Kovchegov and Zaliapin~\cite{kovchegov2020random}. 

In our work, we consider a generalization of the Horton-Strahler number to general rooted trees. Another such definition for trees with any number of children was given by Auber et.\ al.~\cite{auber2004new}. Drmota and Prodinger~\cite{drmota2006register} showed that the distribution of this number for a uniformly chosen $t$-ary tree is also highly concentrated around $\log_4 n$.

\paragraph{Galton-Watson processes} These processes were first studied in the context of disappearance of family names in 1845 by Bienaym\'e~\cite{bienayme1845} and in 1874 by Galton and Watson~\cite{galtonwatson1874}. A Galton-Watson tree \cite{athreya1972branching} with offspring distribution $\xi$ is a rooted ordered tree in which each node reproduces according to $\xi$, i.e., has $i$ children with probability $p_i = \P\{\xi = i\}$. Excluding the distribution where $p_1 \equiv 1$, it is well known that these trees are finite with probability one if and only if $\E\{\xi\} \leq 1$. Simultaneously, the first moment of the size of trees with $\E\{\xi\} = 1$ is infinite. We will consider these critical trees with mean $\mu \ceq \E\{\xi\} = 1$ and variance $\sigma^2 \ceq \V\{\xi\} \in (0, \infty)$.

Let $T$ denote a $\xi$-Galton-Watson tree, which from now on we will call \emph{unconditional} Galton-Watson tree. We distinguish this type of tree from from the trees we study in this paper, that are conditioned to have size $|T| = n$. We will denote such a \emph{conditional} tree as $T_n$. Conditional Galton-Watson trees~\cite{kennedy1975} are an especially interesting structure to study, as certain offspring distributions have been shown to correspond to families of ``simply-generated trees"~\cite{meirmoon1978}, such as  $k$-ary trees, Motzkin trees and planted plane trees. Picking a tree uniformly at random from such a family is thus equivalent to generating a corresponding conditional Galton-Watson tree.

\smallskip 

In this paper, we will show that for a critical conditional Galton-Watson tree $T_n$ with variance $\sigma^2 \in (0, \infty)$,
the Horton-Strahler number of the root satisfies 
\[\frac{\HS(T_n)}{\log_2 n} \to \frac{1}{2}\]
in probability as $n \to \infty$. This expression synthesizes all previously known first order results; however, higher order concentration information is not presented here. 

Furthermore, we present other definitions of possible Horton-Strahler numbers (see section~\ref{sec:otherdef}), and offer partial or full results about these numbers. For instance, included in these definitions is a $k$-ary register function, which corresponds to a computational model in which each register in a computer takes $k$ inputs to produce an output in one step. We show that the $k$-ary register function of a critical conditional Galton-Watson tree grows as
\[ \frac{\log_2 \log_2 n}{\log_2 k/2}\]
in probability as $n \to \infty$.

\section{Unconditional Galton-Watson Trees}
\label{sec:unconditional}
\no 
We begin by determining the distribution of the Horton-Strahler number of a Galton-Watson tree with no size conditioning. These results, particularly Theorem~\ref{thm:uncond}, will be crucial to later proofs of the upper and lower bounds in Sections~\ref{sec:lowerbound} and \ref{sec:upperbound}. Indeed, unconditional Galton-Watson trees are a part of the construction of Kesten's limit tree~\cite{kesten1986}, which we will introduce and heavily use in the next section. 

\smallskip 
Let us first define some notation for an unconditional Galton-Watson tree with mean $\mu = 1$ and variance $0 < \sigma^2 < \infty$. Let the generating function of the offspring distribution $\xi$ be $f(s) = \sum_{i=0}^\infty p_i s^i$  on $0 \leq s \leq 1$, where we recall that $p_i = \P\{\xi = i\}$. Furthermore, let us define for $i \in \N$, the probability that the Horton-Strahler number of the root is $i$,
\begin{equation}
    q_i = \P\{\HS(T) = i\},
\end{equation}
as well as the partial sums 
\begin{equation*}
    q_i^+ = \sum_{k=i}^\infty q_k \qquad \ad \qquad q_i^- = \sum_{k=0}^i q_k,
\end{equation*}
which are involved in the recursion for $q_i$ and other elements of proofs in this section. 

Our first lemma formalizes the intuition that nodes with one child are irrelevant to the Horton-Strahler number, as these nodes simply pass on the number of their only child. We will show that, in fact, altering the offspring distribution by removing the probability of having one child still preserves the original Horton-Strahler number. 

\begin{lem}\label{lem:removep1} 
Let $\xi$ be an offspring distribution with $\mu=1$ and $0 < \sigma^2 < \infty$. Let $\zeta$ be an altered distribution defined by $\P\{\zeta = 1\} = 0$ and for $i \neq 1$, $\P\{\zeta = i\} = p_i/(1-p_1)$. Then, defining $\HS$ and $\HS'$ to be respectively the Horton-Strahler number of an unconditional $\xi$- and $\zeta$-Galton-Watson tree, we have 
\begin{equation}
    \HS' \disteq \HS.
\end{equation}
\end{lem}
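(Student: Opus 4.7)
The plan is to construct a deterministic contraction $\kappa$ on finite rooted trees that collapses maximal chains of single-child nodes, then show both that $\kappa$ preserves the Horton-Strahler number pointwise and that it pushes forward the $\xi$-Galton-Watson law to the $\zeta$-Galton-Watson law. Define $\kappa$ recursively: $\kappa(T) = T$ when $|T| = 1$; if the root of $T$ has a single child $u$, set $\kappa(T) = \kappa(T[u])$; and if the root has $k \geq 2$ children $u_1, \ldots, u_k$, let $\kappa(T)$ be the tree whose root has $k$ children carrying the subtrees $\kappa(T[u_1]), \ldots, \kappa(T[u_k])$. Since $\sigma^2 > 0$ forces $p_1 < 1$, the critical tree $T$ is a.s.\ finite, so $\kappa(T)$ is well-defined with probability one.

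First I would verify that $\HS(\kappa(T)) = \HS(T)$ almost surely, by induction on $|T|$. The case $|T| = 1$ is immediate. When the root has a unique child $u$, formula (\ref{eq:hsdef}) reduces to $\HS(T) = \HS(T[u])$ since the argmax is a singleton and the indicator vanishes, while $\HS(\kappa(T)) = \HS(\kappa(T[u]))$ by construction; the inductive hypothesis on $T[u]$ closes this case. When the root has $k \geq 2$ children, both $\HS(T)$ and $\HS(\kappa(T))$ are computed by the same formula (\ref{eq:hsdef}) applied to the tuples $\bpar{\HS(T[u_i])}_{i=1}^k$ and $\bpar{\HS(\kappa(T[u_i]))}_{i=1}^k$, which agree by induction.

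The main step, and I expect the most delicate, is to show $\kappa(T)$ is distributed as an unconditional $\zeta$-Galton-Watson tree $T'$. I would do this by direct enumeration of preimages. For any finite ordered rooted tree $t'$ in which no internal vertex has exactly one child, the equation $\kappa(t) = t'$ holds precisely for the trees $t$ obtained from $t'$ by inserting, above each vertex $v \in t'$, a chain of $l_v \geq 0$ fresh unary nodes (where ``above the root'' means prepending to the tree upward). Since each inserted node contributes a factor of $p_1$ and each $v \in t'$ retains its $t'$-degree (contributing $p_{\deg_{t'}(v)}$), such a $t$ has $\xi$-Galton-Watson weight $\prod_{v \in t'} p_1^{l_v} p_{\deg_{t'}(v)}$. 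Summing over $(l_v)_{v \in t'} \in \N^{|t'|}$ and factoring gives
\[
\P\{\kappa(T) = t'\} = \sum_{(l_v) \in \N^{|t'|}} \prod_{v \in t'} p_1^{l_v}\, p_{\deg_{t'}(v)} = \prod_{v \in t'} \frac{p_{\deg_{t'}(v)}}{1 - p_1} = \prod_{v \in t'} \P\{\zeta = \deg_{t'}(v)\},
\]
which is precisely the $\zeta$-Galton-Watson probability of $t'$. Since every realization of a $\zeta$-Galton-Watson tree lacks unary internal vertices, this exhausts the support, so $\kappa(T) \disteq T'$. Combined with the a.s.\ identity $\HS(T) = \HS(\kappa(T))$, we obtain $\HS \disteq \HS'$.
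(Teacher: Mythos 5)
Your proof is correct, but it takes a genuinely different route from the paper's. The paper proves the lemma by induction on the \emph{value} $i$ of the Horton-Strahler number: it writes down the distributional recursion for $q_i = \P\{\HS(T)=i\}$ in terms of $q_{i-1}$, $q_{i-1}^-$, $q_{i-2}^-$ via a function $\psi$, does the same for $q_i'$, and checks algebraically that the two recursions force $q_i = q_i'$. You instead build an explicit contraction $\kappa$ on trees, prove the pathwise identity $\HS(\kappa(T)) = \HS(T)$ by induction on $|T|$, and identify the law of $\kappa(T)$ with the $\zeta$-Galton-Watson law by summing the geometric series $\sum_{l\ge 0} p_1^{l}$ over the inserted unary chains at each vertex. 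Your preimage enumeration is valid (each preimage of $t'$ is determined by the chain lengths $(l_v)$, and criticality of $\zeta$ --- $\E\{\zeta\} = (1-p_1)/(1-p_1) = 1$ --- ensures both sides are probability measures on finite unary-free trees, so agreement atom by atom suffices). Your argument is more conceptual and actually yields something slightly stronger, namely a coupling of the two trees under which the Horton-Strahler numbers agree almost surely, not merely in distribution; it also extends verbatim to the rigid variant in Lemma~\ref{lem:removep1rigid}. What it does not produce is the recursion \eqref{eq:probrecursion} for $q_i$, which the paper's inductive proof establishes as a byproduct and which is the essential input to the proof of Theorem~\ref{thm:uncond}; if you adopt your route, that recursion still has to be derived separately (though this is easy once $p_1 = 0$ may be assumed).
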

\no This can be proven via induction on $i \in \N$. The details of the proof are relatively tedious and therefore relegated to Appendix~\ref{appendix:uncond}. 
Armed with this lemma, we will be able to simplify proofs by trivially removing single-child nodes from any offspring distribution we are given without changing the distribution of the Horton-Strahler number; this altered distribution still has $\mu=1$ and $0 < \sigma^2 < \infty$.
\smallskip 

We now come to the main theorem regarding the Horton-Strahler number of unconditional Galton-Watson trees: this number has an exponentially decreasing probability. This has already been shown for Catalan trees; for instance, see Devroye and Kruszewski~\cite{krusz-paper}. We note that a random Catalan tree can be generated as a Galton-Watson tree with offspring distribution $p_0 = 1/4$, $p_1 = 1/2$, $p_2 = 1/4$. 

\paragraph{A Simple Proof for Catalan Trees} We can prove very simply by induction that for a Catalan tree $T$, 
\[\P\{\HS(T) = x\} = 2^{-(x+1)}.\]
\begin{proof} First, we have from Lemma~\ref{lem:removep1} that the Horton-Strahler number of a Catalan tree is distributed identically to that of a fully binary tree generated via the distribution $p_0 = 1/2$, $p_1 = 0$, $p_2 = 1/2$. 
The base case is trivial: 
\[\P\{\HS(T) = 0\} = p_0 = 1/2.\] 
Then, supposing that $\P\{\HS(T) = x-1 \} = 2^{-x}$, we have 
\begin{align*}
    p \ceq \P\{\HS(T) = x\} &= p_2 \Big( (2^{-x})^2 + 2 p \sum_{i=0}^{x-1} 2^{-(i+1)} \Big) \\ 
    &= \frac{1}{2} \bpar{ (2^{-x})^2 + 2 p (1 - 2^{-x}) },
\end{align*}
which can be simplified to $p = \frac{1}{2}(2^{-x})$, completing the proof.
\end{proof}

Curiously, trees generated from all other critical offspring distributions give rise to a very similar formulas for their Horton-Strahler numbers!  

\begin{thm}\label{thm:uncond}
Let $T$ be an unconditional Galton-Watson tree with offspring distribution $\xi$ with $\mu = 1$ and $0 < \sigma^2 < \infty$. Then, with $x \in \N$,
\begin{equation}\label{eq:unconditional}
    \P\{\HS(T) = x\} = \Theta(2^{-x + o(x)})
\end{equation}
as $x \to \infty$.
\end{thm}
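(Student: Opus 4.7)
The plan is to first apply Lemma~\ref{lem:removep1} to reduce to the case $p_1 = 0$. I would then study the tail $\alpha_x := q_x^+ = \P\{\HS(T) \geq x\}$, derive a self-similar recursion for it, and extract the leading geometric decay $2^{-x}$ by Taylor-expanding $f$ at $1$ using $\mu = 1$ and $\sigma^2 < \infty$.

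For the recursion, I would condition on the $\xi$ children of the root. Using independence, the event $\{\HS(\text{root}) \geq x\}$ decomposes as either ``some child has $\HS \geq x$'' or ``all children have $\HS \leq x-1$ but at least two reach $x-1$''. Combining $\E[(1-\alpha_x)^\xi] = f(1-\alpha_x)$ with $\E[\xi(1-\alpha_{x-1})^{\xi-1}] = f'(1-\alpha_{x-1})$ and applying inclusion--exclusion, the $f(1-\alpha_x)$ terms cancel, yielding
$$\alpha_x = 1 - f(1-\alpha_{x-1}) - q_{x-1}\,f'(1-\alpha_{x-1}).$$
Writing $q_{x-1} = \alpha_{x-1} - \alpha_x$ and rearranging would then give the cleaner form
$$q_{x-1}\bigl(1 - f'(1-\alpha_{x-1})\bigr) = \alpha_{x-1} - \bigl(1 - f(1-\alpha_{x-1})\bigr).$$

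Next, since $\mu = 1$ and the tree is a.s.\ finite, $\alpha_x \to 0$, so I can use the local behaviour of $f$ at $1$. With $f(1) = 1$, $f'(1) = 1$, and $f''(1) = \sigma^2$, Taylor's theorem gives $1 - f(1-u) = u - \tfrac{\sigma^2}{2} u^2 + o(u^2)$ and $1 - f'(1-u) = \sigma^2 u + o(u)$ as $u \to 0^+$. Substituting $u = \alpha_{x-1}$ into the rearranged recursion produces $\sigma^2 \alpha_{x-1}\,q_{x-1}(1+o(1)) = \tfrac{\sigma^2}{2}\alpha_{x-1}^2(1+o(1))$, so $q_{x-1}/\alpha_{x-1} \to 1/2$ and therefore $\alpha_x/\alpha_{x-1} \to 1/2$. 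Telescoping logarithms gives $\alpha_x = 2^{-x+o(x)}$, and finally $q_x = \alpha_x - \alpha_{x+1} = \alpha_x(\tfrac12 + o(1)) = 2^{-x+o(x)}$, which is the claim.

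The hard part will be justifying the $o(u^2)$ Taylor remainder from only the finite-second-moment hypothesis (no third moment is assumed). I would do so by writing the remainder in integral form $\int_0^u (u-t)[f''(1-t) - f''(1)]\,dt$, and invoking continuity of $f''$ at $1$, which follows by monotone convergence once $\sigma^2 = f''(1) < \infty$. Without this refinement, the leading-order cancellation that drives $\alpha_x/\alpha_{x-1} \to 1/2$ would be invisible. Incidentally, the same calculation actually yields the stronger $\alpha_x \sim C\cdot 2^{-x}$ for some constant $C > 0$, but only the weaker statement is required for the theorem.
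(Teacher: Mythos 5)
Your argument is correct and reaches the conclusion by a genuinely cleaner route than the paper's. Both proofs rest on the same analytic inputs: a generating-function recursion obtained by conditioning on the root's children, and the Taylor expansion of $f$ at $1$ with the $o(u^2)$ remainder controlled by continuity of $f''$ at $1^-$ --- which you correctly flag as the one delicate point under a bare second-moment hypothesis, and justify exactly as the paper does, via monotone convergence of $f''(s)\uparrow\sigma^2$. The difference is the object you recurse on. The paper works with the point probabilities $q_x=\P\{\HS(T)=x\}$ and arrives at $q_x\approx q_{x-1}^2/(2q_x^+)$; since this couples $q_x$ to its own tail $q_x^+$, the paper must first prove the crude bound $q_x\le q_{x-1}/\sqrt2$ and then run a bootstrapping fixed-point argument to pin the ratio $q_x/q_{x-1}$ at $1/2$. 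By recursing on the tail $\alpha_x=q_x^+$ instead, your inclusion--exclusion cancels the $f(1-\alpha_x)$ terms and leaves a relation involving only $\alpha_{x-1}$ and $q_{x-1}=\alpha_{x-1}-\alpha_x$, so the expansion immediately yields $q_{x-1}/\alpha_{x-1}\to 1/2$, hence $\alpha_x/\alpha_{x-1}\to 1/2$, with no bootstrapping; the two recursions are equivalent (differencing yours in $x$ recovers the paper's), so this is a simplification of the bookkeeping rather than a new idea. Incidentally, your tail decomposition is valid for nodes with one child as well, so the reduction to $p_1=0$ is not actually needed on your route, though invoking Lemma~\ref{lem:removep1} does no harm. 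One caveat: the closing parenthetical that the same computation ``actually yields $\alpha_x\sim C\cdot 2^{-x}$'' overreaches --- that would require the multiplicative errors $1+\e_x$ in the ratio to be summable, i.e.\ a quantitative rate for $f''(1-t)\to\sigma^2$ (something like $\E\{\xi^2\log\xi\}<\infty$), which a finite second moment alone does not supply. Since you only use the weaker $2^{-x+o(x)}$ form, the proof of the theorem as stated is unaffected.
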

\no The main ingredient of the proof of this theorem is the recursion \eqref{eq:probrecursion} obtained in the proof of Lemma~\ref{lem:removep1}. Without loss of generality, assuming that $p_1 = 0$, the probability that the Horton-Strahler number of the root is $i \in \N$ satisfies
\[q_i = \sum_{\ell=2} p_\ell \bigg(\ell q_i (q_{i-1}^-)^{\ell - 1} + \sum_{r = 2}^\ell \binom{\ell}{r} q_{i-1}^r (q_{i-2}^-)^{\ell - r} \bigg).\]
This yields an inequality for $q_i$ involving $q_i^+$ and $q_{i-1}$, and the result then follows from some computations. The details are relatively technical and provide little intuition; we thus include the proof in Appendix~\ref{appendix:uncond}. 
\smallskip

We can also show that the Horton-Strahler number of any critical unconditional Galton-Watson tree (including those with infinite variance) have an exponentially decreasing upper bound.
\begin{thm}\label{thm:unconditionalmonotone}
For all critical unconditional Galton-Watson trees with any $\sigma^2 \in [0,\infty]$, we have that $q_i$ is monotonically decreasing as $i \to \infty$. Also, for $i \geq 2$,
\[q_i \leq \frac{p_0}{2^{i/2}}.\]
\end{thm}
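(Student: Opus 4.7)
The plan is to distill the recursion \eqref{eq:probrecursion} from the proof of Lemma~\ref{lem:removep1} into a single inequality $q_i \leq q_{i-1}/\sqrt{2}$, which subsumes both claims of the theorem at once. First I would invoke Lemma~\ref{lem:removep1} to assume without loss of generality that $p_1 = 0$, so $q_0 = p_0$; the lemma's proof is a tree-structural induction which extends to the $\sigma^2 = \infty$ regime without genuine change. Recognizing the inner binomial sum in \eqref{eq:probrecursion} as $(q_{i-1}^{-})^{\ell} - (q_{i-2}^{-})^{\ell} - \ell\, q_{i-1}(q_{i-2}^{-})^{\ell-1}$ and summing over $\ell$ using $f$ and $f'$, the recursion rearranges into the compact identity
\[
q_i\,\bigl(1 - f'(q_{i-1}^{-})\bigr) \;=\; f(q_{i-1}^{-}) - f(q_{i-2}^{-}) - q_{i-1}\,f'(q_{i-2}^{-}),
\]
valid for $i \geq 1$ under the convention $q_{-1}^{-} := 0$. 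Applying Taylor's theorem with integral remainder to both sides then puts this in the symmetric form
\[
q_i \int_{q_{i-1}^{-}}^{1} f''(t)\,dt \;=\; \int_{q_{i-2}^{-}}^{q_{i-1}^{-}} (q_{i-1}^{-} - t)\, f''(t)\,dt.
\]

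The heart of the argument is a one-line monotonicity observation: $f''(t) = \sum_{k\geq 2} k(k-1)\, p_k\, t^{k-2}$ is a power series with non-negative coefficients, hence non-decreasing on $[0,1)$, irrespective of whether $f''(1) = \sigma^{2}$ is finite. Thus $f''(t) \leq f''(q_{i-1}^{-})$ throughout the right-hand integral and $f''(t) \geq f''(q_{i-1}^{-})$ throughout the left-hand integral. Combined with $\int_{q_{i-2}^{-}}^{q_{i-1}^{-}} (q_{i-1}^{-}-t)\,dt = q_{i-1}^{2}/2$ and $\int_{q_{i-1}^{-}}^{1} dt = q_i^{+}$, and cancelling the positive common factor $f''(q_{i-1}^{-})$, I obtain the master inequality
\[
q_i \, q_i^{+} \;\leq\; \tfrac{1}{2}\, q_{i-1}^{2}.
\]
Since $q_i \leq q_i^{+}$, this tightens to $q_i \leq q_{i-1}/\sqrt{2}$, which delivers both conclusions simultaneously: strict monotonicity of $(q_i)$ is immediate, and an induction from $q_0 = p_0$ yields $q_i \leq p_0/2^{i/2}$ for all $i \geq 1$.

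The main point requiring care will be the $\sigma^2 = \infty$ case, in which $f''(1)$ diverges; this is handled transparently because every appeal to $f''$ above evaluates it only at the interior point $q_{i-1}^{-} < 1$, where it is finite, and because the factor $f''(q_{i-1}^{-})$ cancels from the central inequality, leaving a bound that makes no reference to $\sigma^2$ at all. A minor bookkeeping check also needs to be made: if some $q_i$ were to vanish, the recursion forces $q_j = 0$ for all larger $j$, in which case both assertions are trivial.
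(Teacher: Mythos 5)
Your proof is correct and follows essentially the same route as the paper: the master inequality $q_i q_i^+ \leq q_{i-1}^2/2$ is exactly the bound $q_i \leq q_{i-1}^2/(2q_i^+)$ from which the paper's proof starts, and your derivation of it (generating-function identity, Taylor expansion, monotonicity of $f''$) mirrors the computation carried out in Appendix~\ref{appendix:uncond}. Your use of the integral form of the remainder is a nice touch, since it makes explicit that the inequality survives $\sigma^2 = f''(1) = \infty$ --- a point the paper leaves implicit, as its appendix derivation is nominally performed under the finite-variance hypothesis of Theorem~\ref{thm:uncond}.
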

\begin{proof}
Since $q_i \leq q_{i-1}^2 / 2q_i^+,$ we have that 
\[q_i^2 \leq q_i q_i^+ \leq q_{i-1}^2/2.\]
Thus, $q_i$ is monotone and $q_i \leq q_0 / 2^{i/2} = p_0 / 2^{i/2}$.
\end{proof}

\section{Lower Bound via Kesten's Limit Tree}
\label{sec:lowerbound}
\no 
In order to prove the lower bound, we will be using the notion of Kesten's limit tree~\cite{kesten1986}. This limit tree $T^\infty$ is an infinite tree consisting of a \emph{central spine} and unconditional trees hanging off the spine. To define how this tree and its spine is generated, we define a new size-biased random variable $\zeta$ as $\P\{\zeta = i\} = ip_i$, where $p_i$ correspond to our original offspring distribution $\xi$. This is a valid probability distribution since we are considering distributions $\xi$ with mean $\E\{\xi\} = \sum_{i =1}^\infty ip_i = 1$. The spine of Kesten's tree thus consists of one node on each level that reproduces according to $\zeta$; note that $\zeta \geq 1$, making this tree infinite. One of the children of each spine node, picked uniformly at random, is assigned to be the spine node of the next level, and all others are roots of an unconditional Galton-Watson tree with offspring distribution $\xi$. There is an important way in which conditional Galton-Watson trees converge to this infinite tree. Denote for any tree $T$ the finite tree $\tau(T, h)$, which is $T$ cut off after level $h$. We have that for all fixed heights $h>0$ and all trees $t$, 
\begin{equation}\label{eq:kesten}
    \lim_{n\to\infty} \P\{\tau(T_n, h) = \tau(t,h)\} = \P\{\tau(T^\infty, h) = \tau(t,h)\}.
\end{equation}
In the case where the variance of $\xi$ is finite, Benedikt Stufler proved that this convergence does not in fact require the truncation height $h$ to be a constant --- it can also depend on the size $n$ of the tree. Theorem~5.2 of Stufler's paper \cite{stufler} states that the sequence of truncation heights $h_n$ must then satisfy \[h_n = o(\sqrt{n}).\]

\paragraph{Intuitive ``proof"} The view of the conditional Galton-Watson tree converging to a Kesten tree gives us the intuition for the Horton-Strahler number of the root being $\frac{1}{2}\log_2 n$. It is well known~\cite{flajolet1982average} that conditional Galton-Watson trees have expected height $O(\sqrt{n})$. Then, for approximation, let's consider a Kesten tree cut off at height $\sqrt{n}/\sigma$, denoted $\tau(T^\infty, \sqrt{n}/\sigma)$. This tree has a spine of length $\sqrt{n}/\sigma$ and each spine node indexed $i=1, \dots, \sqrt{n}/\sigma$ has $\zeta_i - 1$ unconditional Galton-Watson trees hanging from it. We can define the $j$-th unconditional tree hanging from spine node $i$ as $T_{ij}$, $j = 1, \dots, \zeta_i - 1$. The Horton-Strahler number of the root then satisfies
\[\max_{ij} \HS(T_{ij}) \leq \HS\bpar{\tau(T^\infty, \sqrt{n}/\sigma) } \leq \max_{ij} \HS(T_{ij}) + 1. \]
We therefore have
\begin{equation*}
\begin{aligned}
    \P\bcurly{\HS\bpar{\tau(T^\infty, \sqrt{n}/\sigma) } \geq x} 
    &\leq \P\Big\{\max_{ij}  \HS(T_{ij}) + 1 \geq x \Big\} \\ 
    &\leq \E\Big\{\sum_{i=1}^{\sqrt{n} / \sigma} \sum_{j=1}^{\zeta_i - 1} \bigind{\HS(T_{ij}) \geq x - 1} \Big\}.
\end{aligned}
\end{equation*}
Using Wald's inequality~\cite{wald1944cumulative} with $\E\{\zeta_i\} = \sigma^2 + 1$, and noting that $T_{ij}$ are all i.i.d.\ and distributed as $T$,
\begin{align}
    \P\bcurly{\HS\bpar{\tau(T^\infty, \sqrt{n}/\sigma) } \geq x} &\leq \frac{\sqrt{n}}{\sigma} \sigma^2 \P\{\HS(T) \geq x - 1\} \nonumber \\ 
    &= \sigma \sqrt{n} 2^{-x + 2 + o(x)}, \label{eq:intuitiveub}
\end{align}
which tends to zero if $x = (1/2 + \e) \log_2 n$ for some $\e > 0$.

For the lower bound, the following is slightly incorrect, as it assumes that each spine node has at least one hanging tree. We present it here to illustrate the main idea; see the proof of Theorem~\ref{thm:lowerbound} for the rigorous statement.
\begin{equation*}
\begin{aligned}
    \P\bcurly{\HS\bpar{\tau(T^\infty, \sqrt{n}/\sigma) } \leq x} &\leq \P\Big\{\max_{ij}  \HS(T_{ij}) \leq x \Big\} \\ 
    &\leq \P\Big\{ \bigcap_{i=0}^{\sqrt{n}/\sigma} [\HS(T_{i1} \leq x ]\Big\} \\ 
    &\leq (1 - \P\{H(T) > x\})^{\sqrt{n}/\sigma} 
\end{aligned}
\end{equation*}
since the unconditional trees $T_{ij}$ are i.i.d.\ distributed as $T$. Then, applying Theorem~\ref{thm:uncond} yields 
\begin{align}
    \P\bcurly{\HS\bpar{\tau(T^\infty, \sqrt{n}/\sigma) } \leq x} 
    &\leq (1 - 2^{-x + o(x)})^{\sqrt{n}/\sigma} \nonumber \\
    &\leq \exp(-\sqrt{n}/\sigma 2^{-x + o(x)}), \label{eq:intuitivelb}
\end{align}
which tends to zero if $x = (1/2 - \e) \log_2 n$ for $\e > 0$.

\medskip   

We thus have that the Horton-Strahler number of Kesten's limit tree truncated at level $\sqrt{n}/\sigma$ tends to $\frac{1}{2}\log_2 n$. Intuitively, since conditional Galton-Watson trees converge to this limit tree as $n \to \infty$, in the sense of \eqref{eq:kesten}, the Horton-Strahler number of our conditional trees should be the same as $n \to \infty$. 
Indeed, the lower bound for conditional Galton-Watson trees can be proven using the same method as what we have just used in this intuitive proof. 

\begin{thm}\label{thm:lowerbound}
Given a critical conditional Galton-Watson tree with offspring distribution $\xi$ with variance satisfying $0 < \sigma^2 < \infty$, then 
\begin{equation}\label{eq:lowerbound}
    \P\{\HS(T) \leq x \mid |T| = n\} \to 0
\end{equation}
if $x = (1/2 - \e) \log_2 n$ for any $\e > 0$.
\end{thm}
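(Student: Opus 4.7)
My plan is to make the sketched Kesten argument rigorous by combining Stufler's coupling of $\tau(T_n, h_n)$ and $\tau(T^\infty, h_n)$ with an i.i.d.\ exposure of hanging subtrees. First I would pick $h_n = \lfloor \sqrt{n}/\omega_n \rfloor$ for a slowly growing sequence $\omega_n \to \infty$ (e.g.\ $\omega_n = n^{\epsilon/3}$), so that $h_n = o(\sqrt{n})$. Since truncation cannot increase the Horton-Strahler number, $\HS(T_n) \geq \HS(\tau(T_n, h_n))$, and Stufler's Theorem~5.2 lets me replace $\tau(T_n, h_n)$ by $\tau(T^\infty, h_n)$ at the cost of a vanishing total-variation error. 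It thus suffices to prove $\P\{\HS(\tau(T^\infty, h_n)) \leq x\} \to 0$ for $x = (1/2-\epsilon)\log_2 n$.

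Next I would decompose $\tau(T^\infty, h_n)$ into its spine and its hanging subtrees. The intuitive argument before the theorem glosses over the possibility that some spine node has no hanging subtree; I would fix this by letting $N$ count the indices $i \in \{1, \dots, \lfloor h_n/2 \rfloor\}$ with $\zeta_i \geq 2$, and applying the law of large numbers to the i.i.d.\ indicators (whose mean $1-p_1$ is positive after invoking Lemma~\ref{lem:removep1}) to get $N \geq c h_n$ with probability $1-o(1)$, for some constant $c > 0$. For each such $i$ pick one hanging tree $T_i$; the resulting $T_i$'s are i.i.d.\ copies of an unconditional $\xi$-GW tree $T$, each sitting inside $\tau(T^\infty, h_n)$ with its own truncation at height $h_n - i \geq h_n/2$.

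The final step is to show that at least one of these hanging trees contributes a large Horton-Strahler number inside the truncated Kesten tree. The key inequality is
\begin{align*}
\P\{\HS(\tau(T_i, h_n-i)) \geq x\} &\geq \P\{\HS(T_i) \geq x\} - \P\{\text{height}(T_i) > h_n/2\} \\
&\geq 2^{-x + o(x)} - O(\omega_n/\sqrt{n}),
\end{align*}
using Theorem~\ref{thm:uncond} for the first term and the classical $O(1/h)$ height tail for critical finite-variance GW trees for the second. For $x = (1/2-\epsilon)\log_2 n$ and $\omega_n = n^{\epsilon/3}$ the first term dominates and the bound is $\geq n^{-1/2+\epsilon+o(1)}$. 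Independence of the $T_i$'s then yields
\[
\P\{\HS(\tau(T^\infty, h_n)) < x\} \leq o(1) + \bigl(1 - n^{-1/2+\epsilon+o(1)}\bigr)^{ch_n} \leq o(1) + \exp\bigl(-n^{2\epsilon/3 + o(1)}\bigr) \to 0.
\]

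\textbf{Main obstacle.} Because the skeleton of the proof is already in the intuitive argument, the real work lies in the bookkeeping. Two points require care: (i) the truncation artifact — a hanging subtree whose height exceeds $h_n - i$ may lose part of the complete binary tree witnessing its HS number — which forces the companion height-tail estimate above; and (ii) balancing $\omega_n$ so that simultaneously $h_n = o(\sqrt{n})$ (needed for Stufler's coupling) and $\omega_n/\sqrt{n} \ll 2^{-x}$ (so that the height-tail error does not swamp the HS lower bound). The choice $\omega_n = n^{\epsilon/3}$ achieves both, and everything else reduces to Theorem~\ref{thm:uncond} and a law of large numbers.
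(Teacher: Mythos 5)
Your proposal is correct, and it is in essence the paper's own argument: Stufler's coupling to transfer from $T_n$ to the Kesten tree, a law of large numbers guaranteeing $\Theta(h_n)$ i.i.d.\ hanging subtrees along the spine, and the tail bound of Theorem~\ref{thm:uncond} applied to each of them. The one structural difference is how the truncation artifact is handled. The paper keeps every hanging tree untruncated (forming $T^\infty_\ell$ with $\ell = \sqrt{n}/\log n$) and pays for this with a separate term $\I$ bounding the probability that $T^\infty_\ell$ reaches above the coupling height $\sqrt{n/\log n}$, which is controlled by Wald's identity plus Kolmogorov's estimate; this keeps the hanging trees as exact copies of $T$, so Theorem~\ref{thm:uncond} applies verbatim. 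You instead truncate each hanging tree at height $h_n - i$ and subtract a per-tree Kolmogorov height-tail correction $O(\omega_n/\sqrt{n})$, which is dominated by $2^{-x+o(x)} = n^{-1/2+\e+o(1)}$; this removes the need for the global height event but requires the uniform-in-$i$ correction, and both bookkeeping schemes close for the same reason. Two minor remarks: you do not need Lemma~\ref{lem:removep1} to conclude $1-p_1>0$ (that is immediate from $\sigma^2>0$), and invoking it here would in any case be delicate, since it preserves the law of $\HS$ of the \emph{unconditional} tree but changes the offspring law and hence the spine distribution of the Kesten tree; fortunately your argument never actually uses it. Also the relevant truncation depth for the tree hanging from spine node $i$ is $h_n-i-1$ rather than $h_n-i$, which is immaterial.
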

\begin{proof}
Recall some notation: $T$ denotes an unconditional Galton-Watson tree, and $T^\infty$ denotes Kesten's limit tree. For some integer $\ell$, we can cut off $T^\infty$ by taking all the nodes on the spine including the node at distance $\ell$ from the root, but no further. To this, we can add all unconditional trees hanging from these $\ell + 1$ spine nodes. This forms a finite tree that we denote $T^\infty_\ell$; a diagram is shown in Figure~\ref{fig:T-inf-ell}.

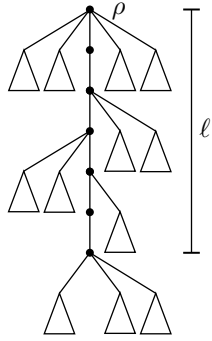
\begin{SCfigure}[1.5][hbtp]  
    \centering
    \begin{tikzpicture}[x=0.75pt,y=0.75pt,yscale=-.75,xscale=.75]

\draw    (274.48,14.87) -- (274.48,69.43) ;
\draw [shift={(274.48,69.43)}, rotate = 90] [color={rgb, 255:red, 0; green, 0; blue, 0 }  ][fill={rgb, 255:red, 0; green, 0; blue, 0 }  ][line width=0.75]      (0, 0) circle [x radius= 2.01, y radius= 2.01]   ;
\draw [shift={(274.48,42.15)}, rotate = 90] [color={rgb, 255:red, 0; green, 0; blue, 0 }  ][fill={rgb, 255:red, 0; green, 0; blue, 0 }  ][line width=0.75]      (0, 0) circle [x radius= 2.01, y radius= 2.01]   ;
\draw [shift={(274.48,14.87)}, rotate = 90] [color={rgb, 255:red, 0; green, 0; blue, 0 }  ][fill={rgb, 255:red, 0; green, 0; blue, 0 }  ][line width=0.75]      (0, 0) circle [x radius= 2.01, y radius= 2.01]   ;
\draw   (294.93,42.16) -- (305.15,69.43) -- (284.7,69.43) -- cycle ;
\draw    (274.61,15.17) -- (294.93,42.16) ;
\draw    (274.48,69.43) -- (274.48,123.99) ;
\draw [shift={(274.48,123.99)}, rotate = 90] [color={rgb, 255:red, 0; green, 0; blue, 0 }  ][fill={rgb, 255:red, 0; green, 0; blue, 0 }  ][line width=0.75]      (0, 0) circle [x radius= 2.01, y radius= 2.01]   ;
\draw [shift={(274.48,96.71)}, rotate = 90] [color={rgb, 255:red, 0; green, 0; blue, 0 }  ][fill={rgb, 255:red, 0; green, 0; blue, 0 }  ][line width=0.75]      (0, 0) circle [x radius= 2.01, y radius= 2.01]   ;
\draw [shift={(274.48,69.43)}, rotate = 90] [color={rgb, 255:red, 0; green, 0; blue, 0 }  ][fill={rgb, 255:red, 0; green, 0; blue, 0 }  ][line width=0.75]      (0, 0) circle [x radius= 2.01, y radius= 2.01]   ;
\draw    (274.48,123.99) -- (274.48,178.56) ;
\draw [shift={(274.48,178.56)}, rotate = 90] [color={rgb, 255:red, 0; green, 0; blue, 0 }  ][fill={rgb, 255:red, 0; green, 0; blue, 0 }  ][line width=0.75]      (0, 0) circle [x radius= 2.01, y radius= 2.01]   ;
\draw [shift={(274.48,151.28)}, rotate = 90] [color={rgb, 255:red, 0; green, 0; blue, 0 }  ][fill={rgb, 255:red, 0; green, 0; blue, 0 }  ][line width=0.75]      (0, 0) circle [x radius= 2.01, y radius= 2.01]   ;
\draw [shift={(274.48,123.99)}, rotate = 90] [color={rgb, 255:red, 0; green, 0; blue, 0 }  ][fill={rgb, 255:red, 0; green, 0; blue, 0 }  ][line width=0.75]      (0, 0) circle [x radius= 2.01, y radius= 2.01]   ;
\draw   (254.02,42.16) -- (264.25,69.43) -- (243.8,69.43) -- cycle ;
\draw    (274.48,14.87) -- (254.02,42.16) ;
\draw   (294.79,96.43) -- (305.02,123.7) -- (284.57,123.7) -- cycle ;
\draw    (274.48,69.43) -- (294.79,96.43) ;
\draw   (254.02,124.01) -- (264.25,151.28) -- (243.8,151.28) -- cycle ;
\draw    (274.48,96.71) -- (254.02,124.01) ;
\draw   (230.16,42.14) -- (240.39,69.41) -- (219.94,69.41) -- cycle ;
\draw    (274.61,15.17) -- (230.16,42.14) ;
\draw   (318.79,42.16) -- (329.02,69.43) -- (308.56,69.43) -- cycle ;
\draw    (274.48,14.87) -- (318.79,42.16) ;
\draw   (318.79,96.73) -- (329.02,123.99) -- (308.56,123.99) -- cycle ;
\draw    (274.48,69.43) -- (318.79,96.73) ;
\draw   (230.03,123.69) -- (240.25,150.96) -- (219.8,150.96) -- cycle ;
\draw    (274.48,96.71) -- (230.03,123.69) ;
\draw   (294.79,205.55) -- (305.02,232.82) -- (284.57,232.82) -- cycle ;
\draw    (274.48,178.56) -- (294.79,205.55) ;
\draw   (318.79,205.85) -- (329.02,233.12) -- (308.56,233.12) -- cycle ;
\draw    (274.48,178.56) -- (318.79,205.85) ;
\draw   (254.02,205.85) -- (264.25,233.12) -- (243.8,233.12) -- cycle ;
\draw    (274.48,178.56) -- (254.02,205.85) ;
\draw   (294.79,150.99) -- (305.02,178.26) -- (284.57,178.26) -- cycle ;
\draw    (274.48,123.99) -- (294.79,150.99) ;
\draw    (342.83,14.87) -- (342.83,178.6) ;
\draw [shift={(342.83,178.6)}, rotate = 270] [color={rgb, 255:red, 0; green, 0; blue, 0 }  ][line width=0.75]    (0,5.59) -- (0,-5.59)   ;
\draw [shift={(342.83,14.87)}, rotate = 270] [color={rgb, 255:red, 0; green, 0; blue, 0 }  ][line width=0.75]    (0,5.59) -- (0,-5.59)   ;

\draw (297, 15.5) node   [align=left] {\begin{minipage}[lt]{8.67pt}\setlength\topsep{0pt}
$\displaystyle \rho $
\end{minipage}};
\draw (356, 94.85) node   [align=left] {\begin{minipage}[lt]{8.67pt}\setlength\topsep{0pt}
$\displaystyle \ell $
\end{minipage}};

\end{tikzpicture}
    \caption{A visualization of $T^\infty_\ell$: Kesten's limit tree $T^\infty$ rooted at a node $\rho$, with its spine truncated after the spine node on level $\ell$. Each triangle represents a hanging unconditional Galton-Watson tree.}\label{fig:T-inf-ell}
\end{SCfigure}

Let $h(T)$ denote the height of a tree $T$, and let $T_n$ denote the tree $T$ conditioned to have size $|T| = n$. For some $x \geq 1$, define the three probabilities
\begin{equation}\label{eq:lowerboundI-II-III}
\begin{aligned}
    \I &\ceq \P\bcurly{ h \bpar{T^\infty_{\sqrt{n}/\log n}} > \sqrt{\frac{n}{\log n}} }, \\ 
    \II &\ceq \P\bcurly{ \tau\bpar{T_n, \sqrt{\frac{n}{\log n}}} \neq \tau\bpar{T^\infty,  \sqrt{\frac{n}{\log n}}} } , \\ 
    \III &\ceq \P\bcurly{ h \bpar{T^\infty_{\sqrt{n}/\log n}} \leq \sqrt{\frac{n}{\log n}}, \tau\bpar{T_n, \sqrt{\frac{n}{\log n}}} = \tau\bpar{T^\infty,  \sqrt{\frac{n}{\log n}}} , \HS(T_n) \leq x }.
\end{aligned}
\end{equation}
We have
\begin{equation}\label{eq:lowerbounddecomposition}
    \P\{\HS(T_n) \leq x\} \leq \I + \II + \III.
\end{equation}
Let's start with the two terms that do not depend on $x$. As discussed earlier in this section, Stufler~\cite{stufler} showed that  $\P\{\tau(T_n, h_n) \neq T^\infty, h_n) \} = o(1)$ if $h_n = o(\sqrt{n})$. Thus, since $\sqrt{n/\log n} = o(\sqrt{n})$, we have 
\begin{equation}\label{eq:lbII}
    \II = o(1).  
\end{equation}
Now for the first term, let $\ell = \sqrt{n}/\log n$. We recall our notation of $\zeta_i$ as the number of children of the $i$-th node on the spine, and further define $T_{ij}$ to be the $j$-th Galton-Watson tree hanging from this $i$-th spine node. These unconditional trees are i.i.d.\ and distributed as $T$. We thus have 
\begin{equation*}
        \I \leq \E \bigg\{ \sum_{i=0}^\ell \sum_{j=1}^{\zeta_i - 1} \bigind{h(T_{ij}) \geq \sqrt{n/\log n} - \ell } \bigg\},
\end{equation*}
and by Wald's identity~\cite{wald1944cumulative}, as $\E\{\zeta\} = \sigma^2 + 1$,
\begin{align*}
    \I \leq \bpar{\frac{\sqrt{n}}{\log n} + 1} \sigma^2  \P\bcurly{\HS(T) \geq \sqrt{\frac{n}{\log n}} - \frac{\sqrt{n}}{\log n} }.
\end{align*}
Finally, using Kolmogorov's estimate~\cite{kestenney1966galton,lyons2017probability}, this grows as
\begin{equation}\label{eq:lbI}
    \I \sim \frac{\sqrt{n}}{\log n} \sigma^2 \frac{2}{\sigma^2 \sqrt{n/\log n}} = \frac{2}{\sqrt{\log n}},
\end{equation}
which approaches zero as $n \to \infty$.

For the third term, note that the first and second events included in the probability imply that the truncated Kesten limit tree $T^\infty_{\ell}$ at $\ell = \sqrt{n}/\log n$ is completely included in our conditional Galton-Watson tree $T_n$. This inclusion implies that $\HS(T^\infty_{\ell}) \leq \HS(T_n)$, which yields
\begin{equation*}
    \III \leq \P\bcurly{\HS(T^\infty_{\ell}) \leq x }.
\end{equation*}
Note that now this is exactly the form of what we had in the intuitive proof! We can thus follow exactly in the steps outlined in the derivation of \eqref{eq:intuitivelb}. Let $T_{ij}$ again denote the $j$-th unconditional Galton-Watson tree hanging from the $i$-th spine node. Let $N = \sum_{i=0}^\ell (\zeta_i - 1)$ be the number of hanging trees, which has mean $\E\{N\} = (\ell + 1)\sigma^2$. Note that the hanging trees are i.i.d.\ distributed as $T$, the number $N$ is a sum of $\ell + 1$ i.i.d.\ random variables. Therefore, using the law of large numbers, we can bound 
\begin{equation}\label{eq:numtrees}
\begin{aligned}
    \P\bcurly{N < (\ell+1) \frac{\sigma^2}{2} } 
    &\leq \P\bcurly{ \left\lvert N - (\ell + 1) \sigma^2 \right\rvert >  (\ell + 1) \frac{\sigma^2}{2} } \\ 
    &= o(1).
\end{aligned}
\end{equation}
We then have 
\begin{align}
    \III &\leq \P\Big\{\max_{ij}  \HS(T_{ij}) \leq x, N \geq (\ell + 1)\frac{\sigma^2}{2} \Big\} + \P\Big\{\max_{ij}  \HS(T_{ij}) \leq x, N < (\ell + 1)\frac{\sigma^2}{2} \Big\} \\ 
    &\leq (1 - \P\{H(T) > x\})^{\ell \sigma^2/2} + o(1) \nonumber \\ 
    &\leq \exp\bpar{-\frac{\sigma^2}{2}\frac{\sqrt{n}}{\log n} 2^{-x + o(x)}} + o(1), \label{eq:lbproof}
\end{align}
which tends to zero as $n \to \infty$ for $x = (1/2 - \e) \log_2 n$; $\e \in (0, 1/2)$.
\end{proof}

Thus, modulo some details regarding the convergence of the conditional Galton-Watson tree to Kesten's limit tree, the intuitive proof idea miraculously works to show the lower bound of our result. 
However, the upper bound cannot be shown following this proof sketch; the contributions of terms underneath any given cutoff cannot be ignored.
We will instead offer a proof based on the construction of rotationally invariant events.

\section{Upper Bound via a Rotationally Invariant Event}
\label{sec:upperbound}
\no 
For the upper bound, we  note that in order for a tree to have a Horton-Strahler number equal to $k$, we must be able to embed a complete binary tree of height $k$ in the original tree (see Figure~\ref{fig:T-completebinary}). 

We therefore immediately have a deterministic upper bound of 
\[H_n \leq \log_2\Big(\frac{n+1}{2}\Big)\]
for any tree of size $n$. We seek to do better than this.

\smallskip 

\paragraph{Random walk view of a Galton-Watson tree} Numbering the nodes in a Galton-Watson tree $T$ in preorder traversal, each node has a tree degree $\xi_i$ independently distributed as $\xi$. This sequence of random variables defines a tree of size
\begin{equation}\label{eq:treesizedef}
    \begin{aligned}
    |T| &= \min \{t > 0: 1 + (\xi_1 - 1) + \cdots + (\xi_t - 1) = 0\}\\ 
        &= \min \Big\{t > 0: \sum_{i=1}^t \xi_i = t - 1\Big\}.
    \end{aligned}
\end{equation}
Thus, for a tree of size $|T| = n$, the event 
\begin{equation}\label{eq:Adef}
    A = \Big\{ \sum_{i=1}^n \xi_i = n-1 \Big\}
\end{equation}
must be true, and furthermore, the random walk must stay positive until the last time step where it reaches $-1$, i.e., for all $t<n$, $\sum_{i=1}^t (\xi_i - 1) \geq 0$.

\paragraph{Rotationally invariant events} Any event $B$ on a tree $T$ of size $|T| = n$ is determined by the degree sequence $\xi_1, \dots, \xi_n$ of the tree. We say that this event $B$ satisfies rotation invariance if it remains true when applied to $\xi_i, \dots, \xi_n, \xi_{1}, \dots, \xi_{i-1}$ for all $i$. We have a powerful tool to deal with such events on a conditional Galton-Watson tree $T$. Letting $A$ be the event defined in \eqref{eq:Adef} and using Dwass' cycle lemma \cite{dwass1969}, it can easily be shown~\cite{OURPAPER} that 
\begin{equation}\label{eq:rotation}
    \P\{B \mid |T| = n\} = \P\{B \mid A\}.
\end{equation}
Note that a rotation of these random variables $\xi_i, \dots, \xi_n, \xi_1, \dots, \xi_{i-1}$ defines a forest in which the last tree is unfinished --- let us call it the $i$-forest. Each of the trees in the forest is obtained as follows: for a tree starting at index $i$, we simply pick the first index $j > i$ for which the degree sequence $\xi_i, \dots, \xi_j$ defines a tree, i.e., satisfies \eqref{eq:treesizedef} for an appropriate tree size. Denote this tree size by $|T(\xi_i, \xi_{i+1}, \dots)|$. If there is no such index $j \in \{1, \cdots, n\}$, then the tree starting at $i$ is undefined. 

In wielding \eqref{eq:rotation}, we are aided by the fact that we have an exact asymptotic limit for $\P\{A\}$ due to Kolchin \cite{kolchin1986}. Letting the period of $\xi_1$ be 
\[h = \gcd\{i \geq 1 : p_i > 0\},\]
we have
\begin{equation}\label{eq:kolchinA}
    \P\{A\} \sim \frac{h}{\sigma \sqrt{2\pi n}}.
\end{equation}

In order to make use of \eqref{eq:rotation} and \eqref{eq:kolchinA} in our current setting, we must define a rotationally invariant event that is related to the Horton-Strahler number. 
Given some i.i.d.\ degree sequence $\xi_1, \dots, \xi_n$, for each $i \in \{ 1, \dots, n\}$, let $T_i$ be the first tree in the $i$-forest. Define $\eta_i$ to be the Horton-Strahler number of this tree: 
\[\eta_i = \HS(T_i). \] 
If $T_i$ is unfinished, let $\eta_i = 0$. Then, define
\begin{equation}
    \HS^*(T_n) = \max_{1 \leq i \leq n} \eta_i,
\end{equation}
a rotationally invariant quantity. Since $\HS(T_n) = \eta_1$ given $|T| = n$, this is an upper bound to the Horton-Strahler number: \[\HS(T_n)\leq \HS^*(T_n).\] 

The upper bound we seek to show will follow from the following theorem linking the Horton-Strahler number of a conditional Galton-Watson tree to the $\eta_1$ we just defined. 

\begin{thm}\label{thm:upperbound}
Given a critical conditional Galton-Watson tree with offspring distribution $\xi$ and $0 < \sigma^2 < \infty$, for some constant $c$, 
\begin{equation}\label{eq:upperboundeq}
\P\{\HS(T) \geq x \mid |T| = n\} \leq c \sqrt{n} \P\{\eta_1 \geq x - 2\}.
\end{equation}
\end{thm}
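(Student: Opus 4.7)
The plan is to convert $\{\HS(T_n) \geq x\}$ into a rotation-invariant event via the embedded complete binary tree characterization of the Horton-Strahler number, then apply the rotational identity \eqref{eq:rotation} and control the resulting probability with the local central limit theorem and Kolchin's formula \eqref{eq:kolchinA}. First I would invoke the characterization depicted in Figure~\ref{fig:T-completebinary}: if $\HS(T_n) \geq x$, then $T_n$ contains an embedded complete binary tree of height $x$ rooted at its root, and in particular at least four vertex-disjoint subtrees at depth two of that embedding whose Horton-Strahler numbers are each at least $x - 2$. Each such subtree coincides with the first tree of the $i$-forest at its preorder index $i$, so $\eta_i \geq x - 2$ at that index. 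Hence, writing $N \ceq \sum_{i=1}^n \1\{\eta_i \geq x - 2\}$, we have $N \geq 4$ on $\{\HS(T_n) \geq x\} \cap \{|T| = n\}$.

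Since $N$ is a symmetric function of the i.i.d.\ sequence $(\xi_1, \dots, \xi_n)$, it is rotation invariant, so the identity \eqref{eq:rotation} combined with Markov's inequality gives
\[
  \P\{\HS(T_n) \geq x \mid |T| = n\} \leq \P\{N \geq 4 \mid A\} \leq \tfrac{1}{4}\, \E\{N \mid A\} = \tfrac{n}{4}\, \P\{\eta_1 \geq x - 2 \mid A\},
\]
where the last equality uses cyclic symmetry of the $\xi_i$'s together with the rotation invariance of $A$. The remaining work is to show that $\P\{\eta_1 \geq x - 2 \mid A\}$ is bounded by $C\,\P\{\eta_1 \geq x - 2\}/\sqrt{n}$, from which the theorem follows upon back-substitution.

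For this bound I would condition on $|T_1| = m$. Because $\xi_{m+1}, \dots, \xi_n$ are independent of $\xi_1, \dots, \xi_m$, on $\{|T_1| = m\}$ the event $A$ reduces to $\{\sum_{j=m+1}^n \xi_j = n - m\}$, whose probability is $O(1/\sqrt{n-m+1})$ by the local CLT. Writing
\[
  \P\{\eta_1 \geq x - 2, A\} = \sum_{m=1}^{n} \P\{\eta_1 \geq x - 2,\, |T_1| = m\}\,\P\bigg\{\sum_{j=1}^{n-m}\xi_j = n - m\bigg\},
\]
and using the Otter-Dwass tail $\P\{|T| = m\} \sim c/m^{3/2}$ to bound each summand, I split the sum at $m = n/2$: the small-$m$ contribution is dominated by $\P\{\eta_1 \geq x - 2\} \cdot O(1/\sqrt{n})$, while the large-$m$ contribution, controlled using the heavy tail of $\P\{|T| = m\}$, is of smaller order. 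Combining with Kolchin's asymptotic $\P\{A\} \sim h/(\sigma\sqrt{2\pi n})$ from \eqref{eq:kolchinA} then produces the promised bound.

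The main obstacle is the sharpness of the local CLT estimate in the last step: one must track the convolution of $\P\{|T| = m\}$ against $1/\sqrt{n-m+1}$ carefully enough to extract exactly the right power of $n$ uniformly in $x$, ensuring that neither the contribution from moderately sized first trees nor the tail from $m$ near $n$ swamps the main $\P\{\eta_1 \geq x - 2\}/\sqrt{n}$ term. The factor $\sqrt{n}$ in the statement then arises naturally as the ratio between the two $n^{-1/2}$-scale asymptotics supplied by Kolchin and the local CLT.
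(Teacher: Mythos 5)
Your opening moves are sound and match the spirit of the paper: the observation that $\HS(T_n)\geq x$ forces at least four vertex-disjoint subtrees with Horton-Strahler number at least $x-2$, the identification of these subtrees with first trees of $i$-forests, and the use of the rotation identity \eqref{eq:rotation} are all correct (the paper uses the same ``four disjoint subtrees'' fact). The proof breaks at the next step. After Markov's inequality you need $\P\{\eta_1\geq x-2\mid A\}\leq C\,\P\{\eta_1\geq x-2\}/\sqrt{n}$, and this intermediate claim is false: for any \emph{fixed} $x$, the left side converges to a positive constant as $n\to\infty$ (conditioning on $A$ barely perturbs a local event at the root), while the right side tends to zero. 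Your own computation confirms this. The convolution over $m=|T_1|$ with the local CLT factor $O(1/\sqrt{n-m})$ gives, for the dominant range $m\leq n/2$, the bound $\P\{\eta_1\geq x-2,\,A\}\leq \frac{C}{\sqrt{n}}\P\{\eta_1\geq x-2\}$; dividing by $\P\{A\}=\Theta(1/\sqrt{n})$ from \eqref{eq:kolchinA} yields $\P\{\eta_1\geq x-2\mid A\}\leq C'\,\P\{\eta_1\geq x-2\}$ with \emph{no} gain of $1/\sqrt{n}$. Back-substituting into $\frac{n}{4}\P\{\eta_1\geq x-2\mid A\}$ then gives only $O(n)\,\P\{\eta_1\geq x-2\}$, a factor $\sqrt{n}$ weaker than \eqref{eq:upperboundeq}; since $\P\{\eta_1\geq x-2\}\approx 2^{-x}$, this recovers nothing beyond the trivial deterministic bound $\HS(T_n)\leq\log_2 n$.

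The missing $\sqrt{n}$ cannot be extracted from the local CLT; it comes from counting \emph{where} a subtree with large Horton-Strahler number can begin. The paper replaces your union bound over all $n$ indices by a first-occurrence decomposition: it sums over the smallest index $i$ with $\eta_i^*\geq x-2$ (after first restricting to subtrees of size at most $n/4$ and to a window $\xi_i,\dots,\xi_{n/2-1}$, which lets Rogozin's inequality decouple the event from $A$ at a cost of $c'/\sqrt{n}$). The key point is that such a first index $i>1$ must be a strict new minimum of the walk $S_j=\xi_1+\cdots+\xi_j-j$ (otherwise node $i$ lies inside an earlier tree that already achieves $\eta_j^*\geq x-2$); by the cycle lemma, $\P\{S_{i-1}=k<\min_{\ell<i-1}S_\ell\}=\frac{|k|}{i-1}\P\{S_{i-1}=k\}$, and Cauchy--Schwarz turns this into a factor $\sigma/\sqrt{i-1}$. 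Summing $\sum_{i\leq n/4}1/\sqrt{i-1}=O(\sqrt{n})$ instead of $n$ identical terms is exactly the $\sqrt{n}$ saving your argument lacks. To repair your proof you would need to replace $\E\{N\mid A\}$ by a count of ladder epochs, i.e., essentially reproduce this decomposition.
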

\begin{proof}
We have from \eqref{eq:rotation} that 
\begin{equation}\label{eq:upperboundstart}
\begin{aligned}
    \P\{\HS(T) \geq x \mid |T| = n \} 
    &\leq \P\{\HS^*(T) \geq x \mid |T| = n\} \\ 
    &= \frac{\P\{\max_i \eta_i \geq x, A\}}{\P\{A\}}.
\end{aligned}
\end{equation}
If $\eta_i \geq x$, then there must exist some $j$ with $\eta_j \geq x-2$ such that the tree size determined by $\xi_j, \xi_{j+1}, \dots$ is at most $n/4$. This is since there must be at least four disjoint subtrees with Horton-Strahler number greater or equal to $x-2$, and the smallest of these subtrees must have size $|T(\xi_j, \xi_{j+1}, \dots)|$ at most $n/4$. We thus define
\begin{equation}\label{eq:in_over4_def}
    \eta_i^{(n/4)} = \begin{cases} \eta_i & \text{if } |T(\xi_i, \xi_{i+1}, \dots)| \leq n/4, \\ 
    0 & \text{otherwise}\end{cases}.
\end{equation}
for all $i$. Thus, the numerator of \eqref{eq:upperboundstart} satisfies
\begin{align}
    \P\{\max_i \eta_i \geq x, A\} &\leq \P\left\{\max_i \eta_i^{(n/4)} \geq x-2, A \right\} \nonumber \\ 
    &\leq 4 \P\left\{\max_{1\leq i \leq n/4} \eta_i^{(n/4)} \geq x-2, A \right\}. \label{eq:max_etai_tbc}
\end{align}
Next, we define the cumulative sums $S_0 = 0$, and for $1\leq i \leq n$,
\begin{equation}
    S_i = \xi_1 + \cdots + \xi_i.
\end{equation}
We define $\eta_i^*$ for $i \leq n/4$ as follows:  
\begin{enumerate}
    \item if the first tree in the $i$-forest is defined within the nodes $\xi_i, \dots, \xi_{n/2-1}$, let $\eta_i^* = \eta_i$;
    \item if the first tree in the $i$-forest is unfinished, let $\eta_i^*$ be the maximal Horton-Strahler number for any subtree of node $i$, i.e., for any tree occurring in the forest defined by $\xi_{i+1}, \dots, \xi_{n/2-1}$.
\end{enumerate}
Note that we again consider the Horton-Strahler number of any unfinished tree in this forest to be zero. As such, the tree defined by $\eta_i^*$ has size less than $n/2$, and for all $1 \leq i \leq n/4$,
\[\eta_i^{(n/4)} \leq \eta_i^* \leq \eta_i.\]
Then, defining the events 
\[A_i = \left\{ \max_{1\leq j <i} \eta_j^* < x-2, \eta_i^* \geq x-2 \right\} \]
and 
\[D_i = \left\{\max_{1\leq j <i} \eta_j^* < x-2, \eta_i^* \geq x-2, A \right\} = A_i \cap A, \]
the inequality \eqref{eq:max_etai_tbc} becomes 
\begin{equation*}
    \P\{\max_i \eta_i \geq x, A\} 
    \leq 4 \P\left\{ \max_{1 \leq i \leq n/4} \eta_i^* \geq x-2, A \right\}  \leq 4 \sum_{i=1}^{n/4} \P\left\{ D_i \right\}.
\end{equation*}
We must now analyze the event $D_i$: 
\begin{align*}
    \P\{D_i\} &= \P\Big\{A_i, \sum_{j=1}^n \xi_j = n-1 \Big\} \\ 
    &= \sum_{k=-\infty}^\infty \P\Big\{ A_i, S_{n/2 - 1} = k, \sum_{j=n/2}^n \xi_j = n-k-1 \Big\}\\
    &= \sum_{k=-\infty}^\infty \P\Big\{ A_i, S_{n/2 - 1} = k\Big\} \cdot \P\Big\{ \sum_{j=n/2}^n \xi_j = n-k-1 \Big\} \\
    &\leq  \sum_{k=-\infty}^\infty \P\Big\{ A_i, S_{n/2 - 1} = k\Big\} \cdot \sup_{k} \P\Big\{ \sum_{j=n/2}^n \xi_j = k \Big\},
\end{align*}
where the third equality holds by independence of the $\xi_i$'s. In order to bound this, we make use of Rogozin's inequality \cite{rogozin1961}, which we recall states that if $X_1, \dots, X_n$ are i.i.d.\ random variables and 
\[p = \sup_x \P\{X_i = x\}, \]
then 
\begin{equation}\label{eq:rogozin}
    \sup_x \P\{X_1 + \cdots + X_n = x\} \leq \frac{\alpha}{\sqrt{n(1-p)}}
\end{equation}
for some universal constant $\alpha$. In our case, we consider offspring distributions $\xi$ satisfying $0 < \sigma^2 < \infty$, which guarantees $p_0 > 0$. We therefore have $p<1$, and arrive at
\begin{equation}
    \P\{D_i\} \leq \frac{c'}{\sqrt{n}}\P\{A_i\}
\end{equation}
for some constant $c'$. 
Further defining the event 
\[B_i = \left\{ \max_{1 \leq j < i} \eta_j^* < x-2 \right\}, \]
we can write the event $A_i$ as $A_i = B_i \cap \{\eta_i^* \geq x-2\}$. The event $A_i$ can only occur either if $i=1$ or if $S_{i-1} < \min_{0 \leq \ell < i-1} S_\ell$.
If $i = 1$, we directly have 
\[ \P\{A_1\} \leq \P\{\eta_1 \geq x - 2\}.\]
If $i > 1$, 
\begin{equation*}
\begin{aligned}
    \P\{A_i\} &= \sum_{k=-\infty}^\infty \P \Big\{ B_i, S_{i-1} = k < \min_{\ell < i-1} S_\ell, \eta_i^* \geq x-2 \Big\} \\ 
    &= \sum_{k=-\infty}^\infty \P \Big\{ B_i, S_{i-1} = k < \min_{\ell < i-1} S_\ell \Big\}  \cdot \P\left\{ \eta_i^* \geq x-2 \right\} \\ 
    &\leq \sum_{k=-\infty}^{-1} \P\{S_{i-1} = k < \min_{\ell < i-1} S_\ell\} \cdot \P\left\{ \eta_i \geq x-2 \right\} \\ 
    &= \sum_{k=-\infty}^{-1} \frac{|k|}{i-1} \P\{S_{i-1} = k\} \cdot \P\left\{ \eta_1 \geq x-2 \right\} \\ 
    &= \frac{\E\bcurly{ |S_{i-1}| \ind{S_{i-1} \leq 1}}}{i-1} \P\left\{ \eta_1 \geq x-2 \right\} ,
\end{aligned}
\end{equation*}
where the last line follows from a rotational argument on $\xi_1, \dots, \xi_{i-1}$. Then, by Cauchy-Schwartz,
\begin{align*}
    P\{A_i\} &\leq \E\left\{\frac{|S_{i-1}|}{i-1} \right\} \P\left\{ \eta_1 \geq x-2 \right\} \\ 
    &\leq \frac{\sqrt{ \E\{S_{i-1}^2\} }}{i-1} \P\left\{ \eta_1 \geq x-2 \right\} \\ 
    &= \frac{\sigma}{\sqrt{i-1}} \P\left\{ \eta_1 \geq x-2 \right\}.
\end{align*}
Thus, considering the two cases $i=1$ and $i>1$, 
\begin{equation}
    D_i \leq \begin{cases} 
    \frac{c'}{\sqrt{n}} \P\left\{ \eta_1 \geq x-2 \right\} & i=1, \\ 
    \frac{c'}{\sqrt{n}} \frac{\sigma}{\sqrt{i-1}}\P\left\{ \eta_1 \geq x-2 \right\} & i>1.
    \end{cases} 
\end{equation}
Therefore, returning to the numerator of \eqref{eq:upperboundstart}, we have
\begin{equation*}
\begin{aligned}
    \P\{\max_i \eta_i \geq x, A\} &\leq 4 \P\{\eta_1 \geq x-2\} \cdot \frac{c'}{\sqrt{n}}\Big( 1 + \sum_{i=2}^{n/4} \frac{\sigma}{\sqrt{i-1}}\Big) \\ 
    &\leq c'' \P\{\eta_1 \geq x-2\}
\end{aligned}
\end{equation*}
for some constant $c''$. Finally, we have from \eqref{eq:kolchinA} that there exists another constant $c$ such that 
\begin{align*}
    \P\{\HS(T) \geq x \mid |T| = n\} &\leq \frac{c'' \P\{\eta_1 \geq x-2\}}{\P\{A\}} \\ 
    &\leq c \sqrt{n}\P\{\eta_1 \geq x-2\},
\end{align*}
completing the proof.
\end{proof}

Everything we require follows from this theorem. The following corollary gives us the upper bound of the classical Horton-Strahler number. 

\begin{cor}\label{cor:bigresult}
For a critical conditional Galton-Watson tree $T$ with $0 < \sigma^2 < \infty$, 
\begin{equation}
    \P\{\HS(T) \geq x \mid |T|=n\} \to 0
\end{equation}
if $x = (1/2 + \e)\log_2 n$ for any $\e > 0$.
\end{cor}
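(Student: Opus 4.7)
The plan is to derive the corollary as an almost immediate consequence of Theorem~\ref{thm:upperbound} combined with the unconditional tail estimate of Theorem~\ref{thm:uncond}. The work has already been done in those two results; what remains is only to combine them and check that the exponents balance out favorably when $x$ is chosen on the order of $\tfrac{1}{2}\log_2 n$.

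First, I would apply Theorem~\ref{thm:upperbound} directly to get
\[
\P\{\HS(T)\ge x \mid |T|=n\} \;\le\; c\sqrt{n}\,\P\{\eta_1 \ge x-2\},
\]
which reduces the problem to bounding $\P\{\eta_1 \ge x-2\}$. By definition $\eta_1$ is either $0$ (if the first tree in the $1$-forest is unfinished) or the Horton-Strahler number of the first completed tree in the $1$-forest; since the degree sequence $\xi_1,\xi_2,\dots$ is i.i.d.\ with the original offspring distribution, that first completed tree is distributed exactly as an unconditional $\xi$-Galton-Watson tree $T$. Consequently $\P\{\eta_1 \ge x-2\} \le \P\{\HS(T)\ge x-2\}$.

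Next, I would invoke Theorem~\ref{thm:uncond} to estimate the unconditional tail. Since $\P\{\HS(T)=k\} = \Theta(2^{-k+o(k)})$, summing a geometrically decaying sequence yields
\[
\P\{\HS(T)\ge x-2\} \;=\; \sum_{k\ge x-2}\Theta(2^{-k+o(k)}) \;=\; \Theta(2^{-x+o(x)}),
\]
where the first term of the tail sum dominates. Plugging back in,
\[
\P\{\HS(T)\ge x \mid |T|=n\} \;\le\; c\sqrt{n}\cdot 2^{-x+o(x)}.
\]

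Finally, substituting $x = (\tfrac{1}{2}+\e)\log_2 n$ gives
\[
c\sqrt{n}\cdot 2^{-x+o(x)} \;=\; c\,n^{1/2}\cdot n^{-(1/2+\e)+o(1)} \;=\; c\,n^{-\e+o(1)} \;\longrightarrow\; 0
\]
as $n\to\infty$, since $\e>0$ is fixed. This completes the argument. There is no real obstacle here: the two heavy lifts (the rotational-invariance reduction to $\eta_1$ and the exponential decay of the unconditional distribution) are already in hand, and the corollary is just the arithmetic check that the $\sqrt{n}$ factor from Theorem~\ref{thm:upperbound} is absorbed by the extra $n^{-\e}$ one gains from pushing $x$ slightly above $\tfrac{1}{2}\log_2 n$. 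The only mild care needed is handling the $o(x)$ term in Theorem~\ref{thm:uncond}, which translates to an $n^{o(1)}$ factor and is harmlessly absorbed into $n^{-\e+o(1)}$.
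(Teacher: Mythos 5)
Your proposal is correct and follows essentially the same route as the paper's own proof: apply Theorem~\ref{thm:upperbound}, bound $\P\{\eta_1 \ge x-2\}$ by the unconditional tail $\P\{\HS(T)\ge x-2\} \le 2^{-x+o(x)}$ from Theorem~\ref{thm:uncond} (noting the unfinished-tree case only helps), and check that the $\sqrt{n}$ factor is absorbed when $x=(1/2+\e)\log_2 n$. The only cosmetic difference is that you spell out the geometric tail summation, which the paper leaves implicit.
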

\begin{proof}We have that 
\[\P\{\eta_1 \geq x - 2\} \leq \P\{\HS(T(\xi_1, \xi_2, \cdots)) \geq x - 2\} ,\]
where $\HS(T(\xi_1, \xi_2, \cdots))$ is the Horton-Strahler number of the first tree in the infinite sequence $\xi_1, \xi_2, \dots$, i.e., the Horton-Strahler number of an unconditional Galton-Watson tree. Note that we have the inequality since there is a possibility for the first tree to be unfinished, in which case $\eta_1 = 0$.

Recall that we had from Theorem~\ref{thm:uncond} that
\[\P\{\HS(T)\geq x-2\} \leq 2^{-x + o(x)}\]
as $x \to \infty$ for an unconditional Galton-Watson tree $T$. Thus, by Theorem~\ref{thm:upperbound}, there exists a constant $c$ such that
\[\P\{\HS(T) \geq x \mid |T|=n \} \leq c \sqrt{n} 2^{-x + o(x)}, \]
which tends to zero if $x = (1/2 + \e)\log_2 n$, for any $\e > 0$.
\end{proof}

\section{Generalizations of the Horton-Strahler Number}
\label{sec:otherdef}
\no 
Our definition \eqref{eq:hsdef} is not the only possible one. In this definition, the number increments at each river branching where two rivers attain the same maximal flow. We can define various generalizations of this number for non-binary trees, ranging from less to more strict. We will discuss three additional natural definitions. All of them will be recursively defined from the values of all subtrees, and leaf nodes $u$ with subtree size $|T[u]| = 1$ will always have the value 0.
\begin{enumerate} 
    \item The \textit{French Horton-Strahler number}, where French refers to its source, Auber et. al.~\cite{auber2004new}. If the root of the tree $T$ has $k$ children with subtrees taking values $\Fr_1 \geq \Fr_2 \geq \cdots \geq \Fr_k \geq 0$ (sorted in decreasing order), then the tree has French Horton-Strahler number
    \begin{equation}\label{eq:french}
        \Fr(T) \ceq \max_{1 \leq i \leq k} (F_i + (i-1)).
    \end{equation}
    
    \item The \textit{Canadian Horton-Strahler number}. If the root of the tree $T$ has $k$ children with subtrees taking values $\Can_1 \geq \Can_2 \geq \cdots \geq \Can_k \geq 0$ (sorted in decreasing order), and we have $r$ children with the maximal value $\Can_1 = \cdots = \Can_r > \Can_{r + 1} \geq \cdots$, then the root has Canadian Horton-Strahler number 
    \begin{equation}\label{eq:canadian}
        \Can(T) \ceq \Can_1 + (r - 1) = \max_{1 \leq i \leq k} \Can_i + (r - 1).
    \end{equation}
    
    \item The \textit{(standard) Horton-Strahler number} studied earlier in this paper was given in \eqref{eq:hsdef}. Following similar notation as given in this list, for $k$ children with subtrees taking values $\HS_1 \geq \HS_2 \geq \cdots \geq \HS_k \geq 0$, then the Horton-Strahler number of the root is 
    \begin{equation*}
        \HS(T) \ceq 
        \begin{cases} 
        \HS_1 = \max_{1 \leq i \leq k} \HS_i  & \text{ if } k = 1, \\
        \HS_1 + \ind{\HS_1 = \HS_2}  & \text{ if } k > 1.
        \end{cases}
    \end{equation*}
    
    \item The \textit{rigid Horton-Strahler number}. Again, with the same notation of $k$ children with subtrees taking values $\Rig_1 \geq \Rig_2 \geq \cdots \geq \Rig_k \geq 0$, we have 
    \begin{equation}\label{eq:rigid}
        \Rig(T) \ceq 
        \begin{cases} 
        \Rig_1 = \max_{1 \leq i \leq k} \Rig_i  & \text{ if } k = 1, \\
        \Rig_1 + \ind{\Rig_1 = \cdots = \Rig_k}  & \text{ if } k > 1.
        \end{cases}
    \end{equation}
\end{enumerate}
Note that all these definitions coincide for binary trees. 

\begin{figure}[hbtp]
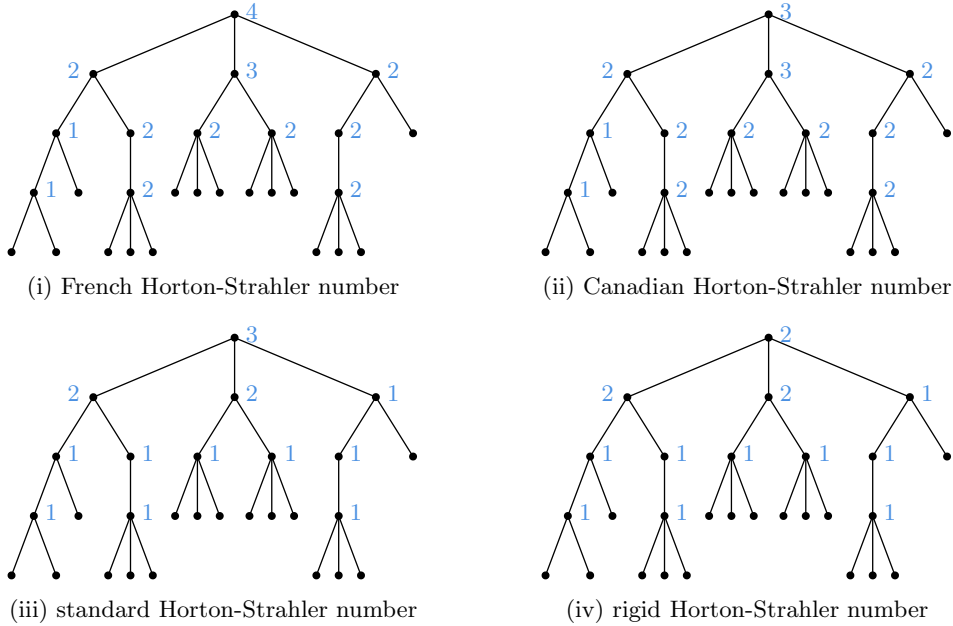

    \centering
    \begin{minipage}{.4\textwidth}
    \centering 
    \input{HS_french} \\ \smallskip \small 
    (i) French Horton-Strahler number
    \end{minipage} \quad 
    \begin{minipage}{.4\textwidth}
    \centering 
    \input{HS_canadian} \\ \smallskip \small 
    (ii) Canadian Horton-Strahler number
    \end{minipage} 
    \\ \medskip  
    
    \begin{minipage}{.4\textwidth}
    \centering 
    \input{HS_standard} \\ \smallskip \small 
    (iii) standard Horton-Strahler number
    \end{minipage} \quad 
    \begin{minipage}{.4\textwidth}
    \centering 
    \input{HS_rigid} \\ \smallskip \small 
    (iv) rigid Horton-Strahler number
    \end{minipage}
    
    \captionsetup{width=.6\textwidth}
    \caption{An illustration of the different Horton-Strahler numbers (i) -- (iv) for a given tree. In all cases, the leaves have value 0.}
    \label{fig:HSnumbers}
\end{figure}

We also have the following ordering: 
\begin{lem}\label{lem:HSnumsordering}
For any tree $T$, the different Horton-Strahler numbers are ordered according to 
\begin{equation}\label{eq:HSnumsInequalities}
    \Fr(T) \geq \Can(T) \geq \HS(T) \geq \Rig(T).
\end{equation}
\end{lem}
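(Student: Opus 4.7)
The plan is to prove all three inequalities simultaneously by structural induction on $T$. The base case $|T|=1$ is immediate since each of the four quantities equals $0$ at a leaf. For the inductive step, let the root have $k \geq 1$ children with subtrees $T_1,\ldots,T_k$ satisfying the inductive hypothesis. If $k=1$, each of the four definitions simply passes through the unique child's value, so the inequalities at $T$ follow at once from those at $T_1$; I may thus assume $k \geq 2$.

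The key preliminary observation is that if two sequences $(a_j)_{j=1}^k$ and $(b_j)_{j=1}^k$ satisfy $a_j \geq b_j$ entrywise, then their decreasing orderings satisfy $a_{(i)} \geq b_{(i)}$ for every $i$. Applied to the sequences of French, Canadian, standard, and rigid values of the children, this gives $\Fr_{(i)} \geq \Can_{(i)} \geq \HS_{(i)} \geq \Rig_{(i)}$ for every $i = 1,\ldots,k$, which is the main handle I would use throughout.

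With this in hand, $\Fr(T) \geq \Can(T)$ follows by choosing the index $i = r_C$ (the multiplicity of $\Can_{(1)}$) in the French formula, so that $\Fr(T) \geq \Fr_{(r_C)} + (r_C - 1) \geq \Can_{(r_C)} + (r_C - 1) = \Can(T)$. The inequality $\HS(T) \geq \Rig(T)$ splits on whether the rigid values at the children are all equal: if not, $\Rig(T) = \Rig_{(1)} \leq \HS_{(1)} \leq \HS(T)$; if so with common value $R$, then $\HS_{(i)} \geq R$ for all $i$, and either $\HS_{(1)} = \HS_{(2)}$ gives $\HS(T) = \HS_{(1)} + 1 \geq R+1 = \Rig(T)$, or $\HS_{(1)} > \HS_{(2)}$ gives $\HS(T) = \HS_{(1)} \geq \HS_{(2)} + 1 \geq R + 1$.

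The main obstacle is the middle inequality $\Can(T) \geq \HS(T)$, because the multiplicity $r_C$ controlling the Canadian increment is not directly comparable to the tie indicator $\mathbf{1}[\HS_{(1)} = \HS_{(2)}]$ controlling the standard one. The delicate subcase is $\HS_{(1)} = \HS_{(2)}$ with $\Can_{(1)} = \HS_{(1)}$: here the two children realizing $\HS_{(1)}$ satisfy $\Can \geq \HS_{(1)} = \Can_{(1)}$, forcing equality and hence $r_C \geq 2$, which yields $\Can(T) \geq \Can_{(1)} + 1 = \HS_{(1)} + 1 = \HS(T)$. The remaining subcases ($\HS_{(1)} > \HS_{(2)}$, or $\Can_{(1)} > \HS_{(1)}$) are one-line consequences of the pointwise domination, and put together they close the induction.
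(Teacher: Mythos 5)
Your proposal is correct and follows essentially the same route as the paper's proof in Appendix~\ref{appendix:alternateHSproofs}: induction on the tree, with each of the three inequalities handled by a short case analysis on ties among the children's values (your subcases are organized slightly differently but amount to the same argument). The one point where you are more careful than the paper is in making explicit that entrywise domination of the children's values implies domination of the decreasingly sorted sequences, a step the paper uses implicitly when it writes, e.g., $\Can_2 \geq \HS_2$ for the sorted values.
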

The proof proceeds by induction on the height of the tree, and is given in Appendix~\ref{appendix:alternateHSproofs}. 

From this lemma, we immediately get that $(1/2) \log_2 n$ is a universal lower bound for both the French and the Canadian Horton-Strahler numbers $\Fr(T_n)$ and $\Can(T_n)$ of any critical conditional Galton-Watson tree $T_n$ with $0 < \sigma^2 < \infty$.
Indeed, the French Horton-Strahler number $\Fr(T_n)$ for a uniformly random $k$-ary tree $T_n$ of size $n$ was shown to satisfy 
\begin{equation}\label{eq:drmotaunif}
    \Fr(T_n) \sim \frac{1}{2} \log_2 n
\end{equation}
in probability by Drmota and Prodinger~\cite{drmota2006register}. They in fact show that $\Fr(T_n)$ is quite concentrated about $(1/2)\log_2 n$, regardless of the value of $k \geq 2$. We recall that a uniformly random $k$-ary tree of size $n$ is a conditional Galton-Watson tree with offspring $\xi \sim \mathrm{Binomial}(k, 1/k)$. Therefore, from what we have shown in this paper, its (standard) Horton-Strahler number also scales as $(1/2)\log_2 n$. One may then be tempted to believe that \eqref{eq:drmotaunif} holds for the French Horton-Strahler number of conditional Galton-Watson trees $T_n$ generated from any offspring distribution $\xi$ with finite variance $\sigma^2$, but that is false. The definition of $\Fr(T_n)$ is quite sensitive to the degree distribution: it is easy to see that if $M_n$ is the maximal degree of any node in $T_n$, then 
\begin{equation*}
    \Fr(T_n) \geq M_n - 1.
\end{equation*}

Maximal degrees of conditional Galton-Watson trees are well understood; see for example Janson's complete treatment~\cite{janson2012}. If $\xi$ has a polynomial tail, then the maximal degree $M_n$ grows at a polynomial rate as well. For exponential tails, $M_n$ grows as a constant multiple of $\log n$. Thus, for general critical offspring distributions, a $(1/2)\log_2 n$ upper bound for the French Horton-Strahler number does not hold. However, it seems plausible that for distributions with bounded degree or exhibiting a faster-than-exponential decrease in the tail, \eqref{eq:drmotaunif} would remain true.

\smallskip 

The Canadian Horton-Strahler number $\Can(T_n)$ is much less sensitive than $\Fr(T_n)$. Just like the French number, it satisfies the lower bound
\[\P \{\Can(T_n) \leq (1/2 - \e)\log_2 n \} = o(1) \]
for all $\e > 0$; but $\Can(T_n)$ can still be much larger than $(1/2)\log_2 n$.


\smallskip 

Finally, from Lemma~\ref{lem:HSnumsordering}, the rigid Horton-Strahler number has $(1/2)\log_2 n + o(1)$ as a strict upper bound. We can further study it using the tools developed in this paper. We will find that it tends as either $\log_2 \log_2 n$ or $\log_2 n$, modulo constant multiplicative factors. Our results are presented in section~\ref{sec:rigid}.

\medskip 

Another possible generalization of the Horton-Strahler can be given from the structural view of the number. We will recall the structural definition of the standard Horton-Strahler number (i.e., the register function) and define the $k$-ary register function for any tree $T$.
\begin{enumerate} 
    \item The \textit{register function} (i.e., the standard Horton-Strahler number) $\HS(T)$ is the height of the largest complete binary tree that can be embedded in $T$. 
    \item Similarly, we define the \textit{$k$-ary register function} $\K(T)$ for any given $k \geq 2$ to be the height of the largest complete $k$-ary tree that can be embedded in $T$. The definition can also be written recursively. First, set the value of a leaf node $u$ with $|T[u]| = 1$ to be $0$. Then, if the root of the tree $T$ has $\ell \geq k$ children with values $\K_1 \geq \K_2 \geq \dots \K_\ell$ (sorted in decreasing order), the tree has $k$-ary register function 
    \begin{equation}\label{eq:karyregisterdef}
        \begin{aligned}
        \K(T) &\ceq \K_1 + \ind{\K_1 = \cdots \K_k} \\ 
        &= \max\{ \K_1, \K_k + 1 \}.
        \end{aligned}
    \end{equation}
    If the tree has $\ell < k$ children, then $\K(T) = \K_1$.
\end{enumerate}
Note that as stated in the introduction, the register function corresponds to $\HS(T) + 1$ in the literature (which amounts to letting the leaves have value 1). We omit this difference in our discussion for clarity of notation.

The definitions of the regular register function and the $k$-ary register function coincide for $k=2$. We also have that $\mathrm{K}(T) \leq \HS(T)$ for any $k$. However, $\K(T)$ does not fit cleanly into the chain of inequalities in Lemma~\ref{lem:HSnumsordering}; its relationship to the rigid Horton-Strahler number depends on the specific offspring distribution. 

The asymptotic behaviour of the $k$-ary register function for a conditional Galton-Watson tree can be determined quite simply using the tools developed in this paper. The result will be presented in section~\ref{sec:karyregister}. We prove a lemma regarding the unconditional tree, and then the theorem follows by the same proof as for the rigid Horton-Strahler number.

\section{The Rigid Horton-Strahler Number}
\label{sec:rigid}
\no 
We begin with analogs of Lemma~\ref{lem:removep1} and Theorem~\ref{thm:uncond} regarding unconditional Galton-Watson trees for the rigid Horton-Strahler number. Note that we only need to deal with trees satisfying $\P\{\xi > 2\} > 0$, since all the definitions of the Horton-Strahler number coincide for binary trees. 

\begin{lem}\label{lem:removep1rigid}
Let $\xi$ be an offspring distribution with $\mu=1$ and $0 < \sigma^2 < \infty$. Consider the altered distribution $\zeta$ defined in Lemma~\ref{lem:removep1} with the probability of one child set to zero. Then, defining $\Rig$ and $\Rig'$ to be respectively the Horton-Strahler number of an unconditional $\xi$- and $\zeta$-Galton-Watson tree, we have 
\begin{equation}
    \Rig' \disteq \Rig.
\end{equation}
\end{lem}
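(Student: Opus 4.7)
The lemma is the rigid analog of Lemma~\ref{lem:removep1}, and it has the same structural content: by the defining recursion \eqref{eq:rigid}, a node with exactly one child simply inherits its child's rigid Horton-Strahler number, so single-child nodes are irrelevant to $\Rig$. The plan is to make this precise via a pruning construction, which gives a cleaner proof than repeating the inductive computation from Appendix~\ref{appendix:uncond}.

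Given an unconditional $\xi$-Galton-Watson tree $T$, I would define a pruned tree $T^\ast$ recursively: follow the single-child chain starting from the root until reaching the first descendant $u$ with degree $0$ or $\geq 2$, take $u$ to be the root of $T^\ast$, and recursively set the subtree rooted at each child $v$ of $u$ (in $T$) to be $(T[v])^\ast$. Since $T$ is a.s.\ finite under criticality, this process is a.s.\ well-defined. An easy induction on $|T|$ then shows $\Rig(T^\ast) = \Rig(T)$ almost surely: the cases of the root having $0$ or $\geq 2$ children are immediate from the inductive hypothesis applied to the subtrees, and the $1$-child case follows from $\Rig(T) = \Rig(T[v])$ via \eqref{eq:rigid} combined with $T^\ast = (T[v])^\ast$.

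I would then verify that $T^\ast$ is distributed as an unconditional $\zeta$-Galton-Watson tree. The root of $T^\ast$ is the first descendant of the original root along the single-child chain with degree $\neq 1$; by the Markov branching property of $T$, its degree is distributed as $\xi$ conditioned on $\xi \neq 1$, which matches $\zeta$ exactly, since $\P\{\xi = k \mid \xi \neq 1\} = p_k/(1 - p_1)$ for $k \neq 1$ and $0$ for $k = 1$. The subtrees rooted at its children are conditionally i.i.d.\ copies of $T$, so by induction their pruned versions are i.i.d.\ $\zeta$-Galton-Watson trees, making $T^\ast$ itself a $\zeta$-Galton-Watson tree. Combining the two steps gives $\Rig(T) = \Rig(T^\ast) \disteq \Rig'$, as claimed.

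I expect the main obstacle to be in the distributional step, namely formalizing the ``walk down single-child descendants to the first branching node'' construction carefully enough that one can iteratively invoke the independence of subtrees and a geometric-type conditioning at every level. If this structural route proves awkward, a purely analytic alternative parallel to Lemma~\ref{lem:removep1} is available: derive the one-step recursion by conditioning on the root degree, which for $i \geq 1$ gives
\begin{equation*}
    r_i = p_1 r_i + \sum_{k \geq 2} p_k \bigg[ r_{i-1}^k + \sum_{j=1}^{k-1} \binom{k}{j} r_i^j (r_{i-1}^-)^{k-j} \bigg],
\end{equation*}
where $r_i = \P\{\Rig(T) = i\}$ and $r_i^- = \sum_{j \leq i} r_j$, the first inner term corresponding to all $k$ children attaining $\Rig = i-1$ (triggering the increment) and the binomial sum to the maximum equalling $i$ without all children matching. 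Dividing by $1 - p_1$ recovers the identical recursion for $r_i'$ under $\zeta$, and the induction closes provided one can show the resulting implicit fixed-point equation has a unique solution in $[0,1]$; the pruning route sidesteps this uniqueness issue entirely.
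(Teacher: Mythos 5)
Your proposal is correct, but your primary route is genuinely different from the paper's. The paper proves the lemma exactly as it proves Lemma~\ref{lem:removep1}: by induction on $i$, writing the one-step recursion $q_i = p_1 q_i + \psi(q_i, q_{i-1}, q_{i-1}^-, \{p_\ell\})$ with $\psi(q_i, q_{i-1}, q_{i-1}^-, \{p_\ell\}) = \sum_{\ell \geq 2} p_\ell \big( q_{i-1}^\ell + \sum_{r=1}^{\ell-1} \binom{\ell}{r} q_i^r (q_{i-1}^-)^{\ell-r} \big)$, writing the analogous identity for $q_i'$ under $\zeta$, and matching the two after dividing by $1-p_1$ --- i.e., your ``purely analytic alternative'' is the paper's actual proof, recursion and all. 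Your pruning construction (contract each maximal single-child chain to its terminal node, observe that the stopped degree is $\xi$ conditioned on $\xi \neq 1$, hence $\zeta$, and that the hanging subtrees are conditionally i.i.d.\ copies of $T$) is a sound and standard argument, and it buys several things the paper's computation does not: it produces an almost-sure coupling $\Rig(T) = \Rig(T^\ast)$ rather than mere equality in distribution, and it proves Lemmas~\ref{lem:removep1} and~\ref{lem:removep1rigid} simultaneously, since the only property used is that a degree-one node passes its child's value through unchanged, which holds for $\HS$, $\Rig$, $\Can$, and $\K$ alike. You are also right to flag the fixed-point issue in the analytic route: unlike the recursion for the standard $\HS$ number, which is linear in $q_i$ and so lets one isolate $q_i$ cleanly, the rigid recursion contains powers $q_i^r$ up to $r = \ell - 1$, so deducing $q_i' = q_i$ from the fact that both satisfy the same polynomial equation requires a uniqueness argument (e.g., that the map $y \mapsto \frac{1}{1-p_1}\psi(y, q_{i-1}, q_{i-1}^-, \{p_\ell\})$ has derivative strictly less than $1$ on the relevant interval, using $\sum_{\ell \geq 2} \ell p_\ell = 1 - p_1$); the paper's appendix passes over this point, so your pruning route is, if anything, the more airtight of the two.
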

This lemma is once again proved via induction, with details laid out in Appendix~\ref{appendix:rigid}. It is used to show the following analog of Theorem~\ref{thm:uncond} for the rigid Horton-Strahler number.

\begin{thm}\label{thm:uncondrigid}
Consider an unconditional critical Galton-Watson tree $T$ with $0 < \sigma^2 < \infty$. Define a parameter 
\begin{equation}\label{eq:definitiond}
    d \ceq \min \{ i > 1: p_i > 0 \}.
\end{equation}
If $d=2$, then 
\begin{equation}\label{eq:rigidthmCASE1}
    \P\{\Rig(T) = x\} = \bpar{1 + \sqrt{\frac{\sigma^2}{2p_2}}}^{-x + o(x)}.
\end{equation}
Otherwise, if $d > 2$, then there exist constants $\alpha_i > 0$ such that 
\begin{equation}\label{eq:rigidthmCASE2}
    (1 + \alpha_3)^{-\alpha_4 (d/2)^x} \leq \P\{\Rig(T) = x\} \leq (1 + \alpha_1)^{-\alpha_2 (d/2)^x}
\end{equation}
for $x \geq \alpha_5$.
\end{thm}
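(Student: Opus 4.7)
The plan is to reduce to a recursive analysis of $r_i \ceq \P\{\Rig(T) = i\}$. \textbf{Setup.} By Lemma~\ref{lem:removep1rigid} we may assume $p_1 = 0$, so $\mu = 1$ forces $\sum_{\ell \geq 2}\ell p_\ell = 1$ and $\sigma^2 = f''(1)$. Let $s_i \ceq \sum_{k \geq i} r_k$. Conditioning on the root degree $\ell$ and using that $\Rig(T) \leq i$ holds iff all children have rigid number $\leq i$ but are not all equal to $i$, one obtains
\[ s_{i+1} - h(s_{i+1}) = \sum_{\ell \geq 2} p_\ell\, r_i^\ell, \qquad h(u) \ceq 1 - f(1 - u). \]
Since $h(u) = u - (\sigma^2/2) u^2 + O(u^3)$ near zero, this rewrites as
\[ \tfrac{\sigma^2}{2}\, s_{i+1}^2\,(1 + O(s_{i+1})) = p_d\, r_i^d\,(1 + o(1)), \]
using $r_i \to 0$ from Theorem~\ref{thm:unconditionalmonotone} and the definition of $d$.

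\textbf{Case $d = 2$.} The leading-order balance $\tfrac{\sigma^2}{2} s_{i+1}^2 \sim p_2\, r_i^2$ forces $s_{i+1}/r_i \to \beta \ceq \sqrt{2 p_2/\sigma^2}$. Combined with the identity $s_i = r_i + s_{i+1}$, this yields $s_{i+1}/s_i \to \beta/(1+\beta) = 1/(1 + \sqrt{\sigma^2/(2p_2)})$, which I call $\alpha$. A standard $\liminf$/$\limsup$ argument on the recursion makes this rigorous, giving $(\log r_i)/i \to \log \alpha$, which is \eqref{eq:rigidthmCASE1}.

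\textbf{Case $d > 2$.} Decay will be super-geometric, so \emph{a posteriori} $s_{i+1} = r_{i+1}(1 + o(1))$, and the recursion becomes $r_{i+1} \sim \sqrt{2 p_d/\sigma^2}\, r_i^{d/2}$. Setting $a_i \ceq -\log r_i$, this is the affine recurrence $a_{i+1} = (d/2)\, a_i + O(1)$, whose solution satisfies $a_i = K (d/2)^i (1 + o(1))$ for some $K > 0$. Writing $e^{-K} = (1+\alpha)^{-\alpha'}$ for appropriate $\alpha, \alpha' > 0$ recovers the form \eqref{eq:rigidthmCASE2}; the matching upper and lower bounds with possibly different constants $\alpha_1, \dots, \alpha_4$ follow from one-sided inductive inequalities $C_1 (d/2)^i \leq a_i \leq C_2 (d/2)^i$ valid for all $i \geq \alpha_5$.

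\textbf{Main obstacle.} The recursion is self-referential: $s_{i+1}$ appears inside the higher-order correction factor, and the sum on the right contains all powers $r_i^j$ for $j \geq d$. Turning the ``$\sim$''s into rigorous two-sided bounds requires bootstrapping --- starting from the crude bound $r_i \leq p_0/2^{i/2}$ of Theorem~\ref{thm:unconditionalmonotone} and iteratively plugging back improved estimates until the leading-order balance closes. In case $d > 2$ one must also verify \emph{a posteriori} that $s_{i+1}/r_{i+1} \to 1$, which is consistent with, but not automatic from, the doubly exponential rate derived.
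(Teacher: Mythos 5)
Your proposal is correct, and it takes a genuinely different route from the paper's. The paper works with the \emph{lower} partial sums $q_i^-$ and the identity $q_i = f(q_{i-1}) - f(q_i) + f(q_i^-) - f(q_{i-1}^-)$, then extracts two-sided bounds via Lagrange-remainder Taylor expansions with explicit $\e$'s (e.g.\ $\e = 1/8$, yielding constants like $\sqrt{6/7}$ and $1/9$ for $d \geq 3$), and for $d=2$ closes the argument with a fixed-point equation $\tfrac{2}{1-\gamma} = 1-\alpha+\tfrac{\alpha}{\gamma^2}$ for the ratio $\gamma = q_k/q_{k-1}$. You instead work with the tail sums $s_i$ and the single exact identity $s_{i+1} - h(s_{i+1}) = \sum_{\ell \geq 2} p_\ell r_i^\ell$ with $h(u) = 1 - f(1-u)$, whose left side is exactly $\tfrac{1}{2}s_{i+1}^2 f''(1-\theta s_{i+1})$ by the Lagrange form of the remainder; the balance $\tfrac{\sigma^2}{2}s_{i+1}^2 \sim p_d r_i^d$ then drops out with essentially no case analysis. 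This buys you several things: for $d=2$ the convergence $s_{i+1}/r_i \to \sqrt{2p_2/\sigma^2}$ is immediate from taking square roots (no bootstrapping or fixed-point argument is actually needed, so your ``main obstacle'' is milder than you fear), and for $d>2$ you obtain the sharper conclusion $r_x = e^{-K(d/2)^x(1+o(1))}$ with a single constant $K>0$, which is stronger than the two-sided form in \eqref{eq:rigidthmCASE2}. Two small points to tighten: write the expansion of $h$ as $h(u) = u - \tfrac{u^2}{2}f''(1-\theta u)$ rather than $u - \tfrac{\sigma^2}{2}u^2 + O(u^3)$, since the $O(u^3)$ form presumes a finite third moment that the theorem does not assume (continuity of $f''$ at $1$ from the left, which follows from $\sigma^2 < \infty$, is all you need); and in the $d>2$ case your \emph{a posteriori} check $s_{i+1}/r_{i+1} \to 1$ is in fact a clean forward deduction, since $s_{i+2} \asymp r_{i+1}^{d/2} \leq s_{i+1}^{d/2} = o(s_{i+1})$ gives $r_{i+1} = s_{i+1} - s_{i+2} \sim s_{i+1}$ directly.
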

The proof of this theorem proceeds similarly to that of Theorem~\ref{thm:uncond}, and is included in Appendix~\ref{appendix:rigid}.
Note that for binary critical trees $T$, we have $p_0 = p_2$, implying $p_1 = 1 - 2p_2$, and $\sigma^2 = p_1 + 4p_2 - 1 = 2p_2$. 
Therefore, $\sqrt{\sigma^2/2p_2} = 1$ and, as expected, the rigid Horton-Strahler number 
is equal to the regular Horton-Strahler number: 
\[ \P \{\Rig(T) = x\} = 2^{-x + o(x)}. \]

We can now derive asymptotics for the rigorous Horton-Strahler number just as we did in sections~\ref{sec:lowerbound} and~\ref{sec:upperbound}. As shown in the preceding theorem, the parameter $d$ matters a lot, determining whether the growth scales as $\log_2 n$ or $\log_2 \log_2 n$. The results are formalized below.

\begin{thm}\label{thm:conditionalrigid}
Consider a critical Galton-Watson tree $T_n$ conditioned to be of size $|T| = n$, and define $d$ as in the previous theorem. If $d > 2$, we have 
\begin{equation}
    \frac{\Rig(T_n)}{\log_2 \log_2 n} \to \frac{1}{\log_2 d/2} 
\end{equation}
in probability as $n \to \infty$. On the other hand, if $d=2$, letting $\gamma = 1 + \sqrt{\sigma^2 / 2p_2}$, 
\begin{equation}
    \frac{\Rig(T_n)}{\log_2 n} \to \frac{1}{2 \log_2 \gamma}
\end{equation}
in probability as $n \to \infty$.
\end{thm}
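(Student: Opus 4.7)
The plan is to mirror the two-sided argument developed in Sections~\ref{sec:lowerbound}--\ref{sec:upperbound} for the standard Horton-Strahler number, substituting the rigid asymptotics from Theorem~\ref{thm:uncondrigid} in place of Theorem~\ref{thm:uncond} to extract the two different thresholds in the cases $d=2$ and $d>2$. I would first establish an upper bound on $\P\{\Rig(T_n) \geq x \mid |T|=n\}$ via the rotationally invariant event construction of Section~\ref{sec:upperbound}, then a matching lower bound via Kesten's limit tree as in Section~\ref{sec:lowerbound}.

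For the upper bound, I would re-run the proof of Theorem~\ref{thm:upperbound} with $\HS$ replaced by $\Rig$ throughout. The only structural fact that must be re-proved is the analog of the ``four disjoint subtrees of size at most $n/4$'' observation used to justify the $n/4$-truncation of $\eta_i$. The recursion $\Rig(T) = \Rig_1 + \ind{\Rig_1 = \cdots = \Rig_k}$ forces that whenever the rigid counter increments, every one of the $k \geq d$ children must hit the common value. Hence if $\Rig(T) \geq x$ with $x \geq 1$, descending from the root we must eventually reach a node all of whose $k \geq d$ children satisfy $\Rig \geq x-1$, yielding $\geq d$ disjoint subtrees with $\Rig \geq x-1$, of which at least one has size $\leq |T|/d$. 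The remainder of the derivation --- Dwass' cycle lemma, Rogozin's inequality, and Kolchin's local limit estimate --- carries over unchanged and yields
\[\P\{\Rig(T_n) \geq x \mid |T|=n\} \leq c \sqrt{n}\,\P\{\Rig(T) \geq x-1\}\]
for an unconditional tree $T$. Plugging in Theorem~\ref{thm:uncondrigid}, the right-hand side vanishes when $x \geq (1+\e)\log_2 n / (2\log_2\gamma)$ in the case $d=2$, and when $x \geq (1+\e)\log_2\log_2 n / \log_2(d/2)$ in the case $d>2$.

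For the lower bound, I would reuse the decomposition $\P\{\Rig(T_n) \leq x\} \leq \I + \II + \III$ from the proof of Theorem~\ref{thm:lowerbound}. The terms $\I$ (height of the truncated Kesten tree) and $\II$ (Stufler's truncation convergence) are independent of the Horton-Strahler variant and are handled exactly as before. The term $\III$ becomes, via the same hanging-tree computation,
\[\III \leq \bpar{1 - \P\{\Rig(T) > x\}}^{\ell \sigma^2/2} + o(1)\]
with $\ell = \sqrt{n}/\log n$. Requiring this to tend to zero and substituting Theorem~\ref{thm:uncondrigid} yields the matching thresholds $x \leq (1-\e)\log_2 n / (2\log_2\gamma)$ when $d=2$ and $x \leq (1-\e)\log_2\log_2 n / \log_2(d/2)$ when $d>2$, completing the convergence in probability.

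The main obstacle is the structural lemma underlying the upper bound: one must check that the $d$-ary splitting described above can be iterated consistently down the tree to justify the reduction to subtrees of size $\leq n/d$. Beyond this, the $d>2$ case requires some care because $\P\{\Rig(T) \geq x\}$ decays doubly exponentially in $x$ with base $d/2$, so the $\sqrt{n}$ prefactor from the cycle lemma and the index shift $x \mapsto x-1$ must be tracked carefully through the iterated-log asymptotic to confirm that the leading constant in the denominator is indeed $\log_2(d/2)$.
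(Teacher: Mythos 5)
Your proposal is correct and reaches the same thresholds by the same overall skeleton (tail bounds from Theorem~\ref{thm:uncondrigid}, Kesten's limit tree for the lower bound), but it diverges from the paper on the upper bound when $d>2$. There the paper does something much cruder: it simply drops the conditioning, $\P\{\Rig(T)\geq x\mid |T|=n\}\leq \P\{\Rig(T)\geq x\}/\P\{|T|=n\}=\Theta(n^{3/2})\,\P\{\Rig(T)\geq x\}$, and lets the doubly exponential decay $(1+\alpha_1)^{-\alpha_2(d/2)^x}$ absorb the polynomial prefactor --- no rotation invariance, no Rogozin, no structural lemma. Your route, re-running the Section~\ref{sec:upperbound} machinery to get a $\sqrt{n}$ prefactor, is valid but is more work than the $d>2$ case requires; its real payoff is in the $d=2$ case, where the decay is only singly exponential and the crude $n^{3/2}$ bound would yield the wrong constant $3/(2\log_2\gamma)$ instead of $1/(2\log_2\gamma)$. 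The paper handles $d=2$ by asserting that the proofs for the standard number ``translate exactly,'' which silently presupposes the very structural fact you isolate and prove: that a rigid increment forces \emph{all} $k\geq d$ children of some node to attain the value, producing $d$ disjoint subtrees with $\Rig\geq x-1$, the smallest of size at most $n/d$ (and, iterating twice when $d=2$, four subtrees of size at most $n/4$ with $\Rig\geq x-2$, exactly matching the paper's template). So your version is slightly heavier for $d>2$ but makes explicit a verification the paper leaves implicit for $d=2$; both are sound.
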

\begin{proof}
Let us begin with the $d > 2$ case. The upper bound can be proven very simply. 
\begin{align*}
    \P\{\Rig(T) \geq x \mid |T| = n\} &= \frac{\P\{\Rig(T) \geq x, |T| = n\}}{\P\{|T| = n\}}  \\ 
    &\leq \frac{\P\{\Rig(T) \geq x\}}{\P\{|T| = n\}} \\ 
    &= \Theta(n^{3/2}) \P\{\Rig(T) \geq x\}
\end{align*}
where $T$ is an unconditional Galton-Watson tree. We can then bound it using Theorem~\ref{thm:uncondrigid}: there exist constants $\alpha_i > 0$ such that
\begin{equation*}
    \begin{aligned}
        \P\{\Rig(T) \geq x \mid |T| = n\} &\leq \Theta(n^{3/2}) \alpha_1 (1 + \alpha_2)^{-\alpha_3 (d/2)^x}.
    \end{aligned}
\end{equation*}
This tends to zero for $x = (1+\e) \frac{\log_2 \log_2 n}{\log_2 d/2}$.
\smallskip 

The lower bound can be proven following the outline of the ``intuitive proof" from section~\ref{sec:lowerbound}, using the same method as  Theorem~\ref{thm:lowerbound}.
We have the same decomposition as in \eqref{eq:lowerbounddecomposition}:
\begin{equation}
    \P\{\Rig(T) \leq x \mid |T| = n\} = \I + \II + \III,
\end{equation}
where $\I$, $\II$ and $\III$ are exactly as defined in \eqref{eq:lowerboundI-II-III}, except with $\HS$'s switched for $\Rig$'s in the definition of the third term. We showed in \eqref{eq:lbI} and \eqref{eq:lbII} that both $\I$ and $\II$ are $o(1)$. To upper bound $\III$, we can once again consider the truncated Kesten limit tree $T^\infty_\ell$ at $\ell = \sqrt{n}/\log n$ depicted in Figure~\ref{fig:T-inf-ell}, with unconditional hanging trees $T_{ij}$ i.i.d.\ distributed as $T$. Recall from \eqref{eq:numtrees} that the number of hanging trees $N$ satisfies 
\[\P\bcurly{N < (\ell + 1)\frac{\sigma^2}{2} } = o(1).\]
We can thus bound 
\begin{align*}
    \III &\leq \P \{ \Rig(T_\ell^\infty) \leq x \} \\ 
    &\leq \P\Big\{\max_{ij}  \Rig(T_{ij}) \leq x, N \geq (\ell + 1)\frac{\sigma^2}{2} \Big\} + \P\Big\{\max_{ij}  \HS(T_{ij}) \leq x, N < (\ell + 1)\frac{\sigma^2}{2} \Big\} \\ 
    &\leq (1 - \P\{\Rig(T) > x\})^{(\ell+1)\sigma^2/2} \\ 
    &\leq \exp \bpar{- \frac{\sigma^2}{2} \frac{\sqrt{n}}{\log n} \P \{\Rig(T) = x\} } \\ 
    &\leq \exp \bpar{- \frac{\sigma^2}{2} \frac{\sqrt{n}}{\log n} \bpar{\frac{1}{1+\alpha}}^{\beta (d/2)^x}} 
\end{align*}
for some $\alpha, \beta > 0$. As we wished to show, this tends to zero for $x = (1- \e) \frac{\log_2 \log_2 n}{\log_2 d/2}$ for any $\e > 0$.

\bigskip  
For the $d = 2$ case, we note that the form of $\P\{\Rig(T) = x\}$ in \eqref{eq:rigidthmCASE1} is identical to that of $\P\{\HS(T)= x\}$, where the base of the exponent changes from $2$ to $\gamma$. The proofs of the upper and lower bound for the regular Horton-Strahler number thus translate to this case exactly. We have
\begin{equation*}
    \P\{\Rig(T) \leq x \mid |T| = n\} \leq \exp\bpar{ - \frac{\sqrt{n}}{\log n} \gamma^{-x + o(x)} },
\end{equation*}
which tends to zero for $x = \bpar{\frac{1}{2\log \gamma} - \e}\log_2 n$ for any $\e > 0$, completing the lower bound. For the upper bound, there exists $c$ such that
\begin{equation*}
    \P\{\Rig(T) \geq x \mid |T| = n\} \leq c \sqrt{n} \gamma^{-x+ o(x)},
\end{equation*}
which tends to zero for $x = \bpar{\frac{1}{2\log \gamma} + \e}\log_2 n$ for any $\e > 0$.
\end{proof}

\section{The ${k}$-ary Register Function}
\label{sec:karyregister}
\no 
The $k$-ary register function $\K(T)$ was defined in \eqref{eq:karyregisterdef} as the height of the largest complete $k$-ary tree that can be embedded in $T$. We can show that the $k$-ary register function of a critical Galton-Watson tree converges to $(\log_2 k/2)^{-1}\log_2 \log_2 n$ in probability.
Recall that the asymptotic behaviour of the rigid Horton-Strahler for the unconditional tree --- Theorem~\ref{thm:uncondrigid} --- was quite tedious to prove. In contrast, we present a relatively simple proof of the analogous result for the $k$-ary register function, albeit with an extra restriction on the moments of the offspring distribution.

\begin{thm}\label{thm:uncondkary}
Suppose $k \geq 3$. Let $\xi$ be such that $\E\{\xi\} = 1$, $\V\{\xi\} > 0$, $\E\{\xi^{k+1}\} < \infty$ and $\P\{\xi \geq k\} > 0$. Let $T$ be an unconditional Galton-Watson tree. Then for some $x^*$ large enough, there exist $\alpha, \alpha' > 0$ and $\beta, \beta' \in (0,1)$ such that
\begin{equation}
    \alpha' \cdot \beta'^{(k/2)^x} \leq \P\{\K(T) = x\} \leq \alpha \cdot \beta^{(k/2)^x}
\end{equation}
for all $x \geq x^*$.
\end{thm}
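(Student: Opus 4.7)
The plan is to mirror the proof of Theorem~\ref{thm:uncondrigid} from the appendix, since the structural recursion for $\K$ --- where the count increments once the top $k$ children's values coincide --- closely parallels the one for $\Rig$. First I would establish the $\K$-analog of Lemma~\ref{lem:removep1} (in the spirit of Lemma~\ref{lem:removep1rigid}): a node with a single child simply passes its $\K$-value upward, so the distribution of $\K(T)$ is unchanged if we remove the probability mass $p_1$ from $\xi$, and we may therefore assume $p_1=0$. Writing $\rho_i \ceq \P\{\K(T)\geq i\}$ and $q_i \ceq \rho_{i-1}-\rho_i$, the set identity
\begin{equation*}
    \{\K(T)\geq i\} \;=\; \{\text{some child has }\K\geq i\}\cup\{\text{at least }k\text{ children have }\K = i-1\}
\end{equation*}
combined with $\E\bbr{\binom{\xi}{j}s^{\xi-j}} = f^{(j)}(s)/j!$ rearranges, exactly as in the rigid case, into the closed-form recursion
\begin{equation*}
    f(1-\rho_i) - (1-\rho_i) \;=\; \sum_{j=k}^{\infty} \frac{q_i^j}{j!}\, f^{(j)}(1-\rho_{i-1}).
\end{equation*}

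Next I would carry out the asymptotic analysis for $i$ large. The sequence $\rho_i$ is trivially decreasing and bounded below by $0$; a brief inspection of the recursion shows $\rho_i\to 0$, since otherwise the left-hand side would be bounded away from $0$ while the right-hand side vanishes as $q_i\to 0$. Once $\rho_i\to 0$, Taylor expansion gives the left-hand side as $\tfrac{\sigma^2}{2}\rho_i^2(1+o(1))$, and the right-hand side is dominated by its $j=k$ term, contributing $\tfrac{f^{(k)}(1)}{k!}\rho_{i-1}^k(1+o(1))$. The strict positivity $f^{(k)}(1)=\E[\xi(\xi-1)\cdots(\xi-k+1)]>0$ is exactly the hypothesis $\P\{\xi\geq k\}>0$, while the tail is controlled via the binomial estimate
\begin{equation*}
    \sum_{j\geq k+1}\frac{q_i^j}{j!}\,f^{(j)}(1-\rho_{i-1}) \;\leq\; 2^{k+1}\,\E\bbr{\binom{\xi}{k+1}}\,\rho_{i-1}^{k+1},
\end{equation*}
which is finite by $\E[\xi^{k+1}]<\infty$ and negligible compared to the $j=k$ term. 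Combining these, there exist constants $0<c_1\leq c_2<\infty$ with $c_1\,\rho_{i-1}^{k/2}\leq \rho_i\leq c_2\,\rho_{i-1}^{k/2}$ for all sufficiently large $i$; iterating yields $(\beta')^{(k/2)^x}\leq\rho_x\leq\beta^{(k/2)^x}$ for suitable $\beta,\beta'\in(0,1)$.

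The upper half of the theorem then follows directly from $\P\{\K(T)=x\}\leq\rho_x$. For the lower half, the hypothesis $k\geq 3$ gives $k/2>1$, so the upper estimate yields $\rho_{x+1}=O(\rho_x^{k/2})=o(\rho_x)$, whence $\P\{\K(T)=x\}=\rho_x-\rho_{x+1}\geq\tfrac12\rho_x\geq\tfrac12(\beta')^{(k/2)^x}$ for $x$ sufficiently large. I expect the main obstacle to be the careful tail control in the recursion --- specifically, ensuring the $j\geq k+1$ contributions are genuinely $o(\rho_{i-1}^k)$, which is exactly what forces the moment hypothesis $\E[\xi^{k+1}]<\infty$ --- together with the preliminary bootstrap showing $\rho_i\to 0$ before the asymptotic equivalence $\rho_i\asymp\rho_{i-1}^{k/2}$ can be exploited.
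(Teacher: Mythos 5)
Your proposal is correct and arrives at the same two-sided relation $q_x \asymp q_{x-1}^{k/2}$ that drives the paper's argument, but you reach it by a somewhat different and arguably cleaner route. The paper works directly with the point probabilities $q_x = \P\{\K(T)=x\}$ and sandwiches $q_x$ between two separate Bonferroni (inclusion--exclusion) truncations, which it then massages into $\tfrac{\sigma^2}{2}q_x^2 \asymp \E\big\{\tbinom{\xi}{k}\big\} q_{x-1}^k$. You instead pass to the tails $\rho_i=\P\{\K(T)\ge i\}$ and compute the complementary event exactly (no child reaches $i$ and fewer than $k$ children equal $i-1$), which yields the exact identity $f(1-\rho_i)-(1-\rho_i)=\sum_{j\ge k}\tfrac{q_i^j}{j!}f^{(j)}(1-\rho_{i-1})$ with no inequalities to track; the left side is $\tfrac{\sigma^2}{2}\rho_i^2(1+o(1))$ by strict convexity of $f$ at $1$, and your tail estimate isolates exactly where $\E\{\xi^{k+1}\}<\infty$ enters, as in the paper. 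Your preliminary $p_1$-removal lemma is valid for $\K$ (a single child passes its value up since $1<k$) but is unnecessary in your formulation, because the terms $j=0,\dots,k-1$ are absorbed into $f(1-\rho_i)$ whether or not $p_1=0$. One step should be made explicit: asserting that the $j=k$ term contributes $\tfrac{f^{(k)}(1)}{k!}\rho_{i-1}^k(1+o(1))$ presupposes $q_i=(1-o(1))\rho_{i-1}$, i.e.\ $\rho_i=o(\rho_{i-1})$, which does not follow from the bootstrap $\rho_i\to 0$ alone; you must first run the upper bound with the trivial estimate $q_i\le\rho_{i-1}$ to obtain $\rho_i=O(\rho_{i-1}^{k/2})$, then use $k/2>1$ to conclude $q_i\sim\rho_{i-1}$, and only then extract the matching lower bound. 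With that ordering spelled out, the argument closes and delivers the stated doubly exponential bounds after iteration, together with your observation that $\P\{\K(T)=x\}=\rho_x-\rho_{x+1}\ge\tfrac12\rho_x$ for large $x$.
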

Before proving this theorem, note that this is exactly the same as the tail bounds of the rigid Horton-Strahler number when $d > 2$; see Theorem~\ref{thm:uncondrigid}, with $k$ taking the place of $d$. Therefore, with minor modifications, Theorem~\ref{thm:conditionalrigid} gives us the asymptotic behaviour of $\K(T)$ for a conditional Galton-Watson tree:

\begin{cor}
Let $k \geq 3$ and let $\xi$ be as specified in the previous theorem. Then, letting $T_n$ denote a conditional Galton-Watson tree of size $n$,
\begin{equation}
    \frac{\K(T_n)}{\log_2 \log_2 n} \to \frac{1}{\log_2 k/2} 
\end{equation}
as $n \to \infty$ in probability. 
\end{cor}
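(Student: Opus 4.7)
The plan is to mirror the proof of Theorem~\ref{thm:conditionalrigid} in the case $d>2$. Theorem~\ref{thm:uncondkary} hands us tail bounds of exactly the same double-exponential form as in \eqref{eq:rigidthmCASE2}, with $k/2$ playing the role of $d/2$, so both directions of the corollary should follow by the same machinery with the substitution $d \mapsto k$ and no genuinely new ingredients.

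For the upper bound, I would start from the crude sandwich
\[\P\{\K(T) \geq x \mid |T|=n\} \leq \frac{\P\{\K(T) \geq x\}}{\P\{|T|=n\}} = \Theta(n^{3/2})\,\P\{\K(T) \geq x\},\]
using $\P\{|T|=n\} = \Theta(n^{-3/2})$ for a critical $\xi$ with finite variance. Summing the pointwise tail bound from Theorem~\ref{thm:uncondkary} yields $\P\{\K(T) \geq x\} \leq C\,\beta^{(k/2)^x}$ for some $\beta \in (0,1)$ and all large $x$. Taking $x = (1+\e)\log_2\log_2 n / \log_2(k/2)$ makes $(k/2)^x = (\log_2 n)^{1+\e}$, so the conditional probability is at most $C n^{3/2} \beta^{(\log_2 n)^{1+\e}}$, which tends to zero since the super-polynomial factor in the exponent beats $n^{3/2}$.

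For the lower bound, I would recycle the Kesten-tree decomposition \eqref{eq:lowerbounddecomposition}--\eqref{eq:lowerboundI-II-III} with $\K$ substituted for $\HS$ in the definition of $\III$. The terms $\I$ and $\II$ depend only on height and on Stufler's coupling of $T_n$ to $T^\infty$, so the estimates \eqref{eq:lbI}--\eqref{eq:lbII} survive verbatim. For $\III$, the key structural fact is that $\K$ is monotone under subtree inclusion: since $\K(T)$ is the height of the largest complete $k$-ary tree embeddable in $T$, the inclusion $T^\infty_{\ell} \subseteq T_n$ gives $\K(T^\infty_\ell) \leq \K(T_n)$, and likewise $\K(T^\infty_\ell) \geq \max_{ij}\K(T_{ij})$ where $T_{ij}$ are the unconditional trees hanging off the spine. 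Combining this with the concentration bound \eqref{eq:numtrees} on the number of hanging trees and the lower tail $\P\{\K(T)=x\} \geq \alpha'\beta'^{(k/2)^x}$ from Theorem~\ref{thm:uncondkary} yields
\[\III \leq \exp\bpar{-\tfrac{\sigma^2}{2}\tfrac{\sqrt{n}}{\log n}\,\alpha'\beta'^{(k/2)^x}} + o(1).\]
For $x = (1-\e)\log_2\log_2 n / \log_2(k/2)$ we have $(k/2)^x = (\log_2 n)^{1-\e} = o(\log n)$, so the exponent diverges to $-\infty$ and $\III \to 0$.

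The argument is essentially bookkeeping; all of the actual difficulty has been absorbed into Theorem~\ref{thm:uncondkary}. The only point needing a line of justification is the subtree-monotonicity of $\K$, which is immediate from its embedding definition and is in fact cleaner than the corresponding step for $\Rig$. One should also note that summing the pointwise estimate of Theorem~\ref{thm:uncondkary} does not worsen the asymptotic form of the tail probability, which is automatic from the double-exponential decay.
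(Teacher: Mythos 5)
Your proposal is correct and follows essentially the same route as the paper, which itself proves the corollary by observing that Theorem~\ref{thm:uncondkary} has the same double-exponential tail form as Theorem~\ref{thm:uncondrigid} with $k$ in place of $d$, and then reusing the proof of Theorem~\ref{thm:conditionalrigid} (the $\Theta(n^{3/2})$ conditioning bound for the upper tail, and the Kesten-tree decomposition with the estimates \eqref{eq:lbI}, \eqref{eq:lbII} and \eqref{eq:numtrees} for the lower tail). Your explicit remarks on the subtree-monotonicity of $\K$ and on summing the pointwise tail are exactly the ``minor modifications'' the paper leaves implicit.
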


We can now proceed to the proof of the result about unconditional conditional Galton-Watson trees. 

\begin{proof}[Proof of Theorem~\ref{thm:uncondkary}] Let us begin by defining $q_x = \P\{\K(T) = x\}$, as well as $\{p_i\}$, $q_x^+$ and $q_x^-$ analogously to how they were defined in previous sections.
We can first solve 
\[q_0 = p_0 + p_1q_0 + p_2 q_0^2 + \cdots p_{k-1}q_0^{k-1}\]
for a finite value of $q_0$. 

Then, for $x > 0$, by multiple uses of the inclusion-exclusion formula, 
\begin{equation*}
    q_x \leq \E\bcurly{\binom{\xi}{1}q_x - \binom{\xi}{2}q_x^2 + \binom{\xi}{3}q_x^3} - \bpar{\E\bcurly{ \binom{\xi}{k} (q_x^+)^k - (k+1) \binom{\xi}{k+1}(q_x^+)^{k+1} }} + \E\bcurly{\binom{\xi}{k}q_{x-1}^k}
\end{equation*}
and 
\begin{equation*}
    q_x \geq \E\bcurly{\binom{\xi}{1}q_x - \binom{\xi}{2}q_x^2} - \E\bcurly{\binom{\xi}{k}(q_x^+)^k} + \E\bcurly{\binom{\xi}{k} q_{x-1}^k} - \E\bcurly{(k+1)\binom{\xi}{k+1}q_{x-1}^{k+1} }.
\end{equation*}
Noting that $\E\big\{\binom{\xi}{1} \big\} = 1$, $\E\big\{\binom{\xi}{2} \big\} = \sigma^2 / 2$ and for any $\ell \leq k$, $\E\big\{\binom{\xi}{\ell} \big\} \ceq \mu_\ell < \infty$,
we have for $k \geq 3$,
\[ \frac{\sigma^2}{2}q_x^2 - \mu_3 q_x^3 + \mu_k (q_x^+)^k - \mu_{k+1}(k+1) (q_x^+)^{k+1} \leq \mu_k q_{x-1}^k. \]
It is easy to see that $q_x \to 0$ as $x \to \infty$. Therefore, for any $\e > 0$, we can find $x^*$ such that for all $x \geq x^*$, $q_x \leq \e$ and $q_x^+ \leq (1+\e)q_x$. We then have
\[\bpar{\frac{\sigma^2}{2} - \mu_3 \e -  (1+\e)\mu_{k+1} (k+1) (\e')^{k+1}} q_x^2 \leq \mu_k q_{x-1}^k\]
for $x \geq x^*$. Picking an $\e>0$ and corresponding $x^*$ such that the term in parentheses belongs to $[\sigma^2/3,  \sigma^2/2)$, for every $x \geq x^*$, 
\[q_x^2 \leq \frac{3\mu_k}{\sigma^2}q_{x-1}^k,\]
i.e., 
\begin{equation}\label{eq:karyupperb}
    q_x \leq \sqrt{\frac{3\mu_k}{\sigma^2}} q_{x-1}^{k/2}.
\end{equation}

Similarly, we have for the lower bound
\[ \frac{\sigma^2}{2} q_x^2 \geq - \mu_k (q_x^+)^k + \mu_k q_{x-1}^k - (k+1) \mu_{k+1} q_{x-1}^{k+1}, \]
from which we deduce for $\e>0$ and corresponding $x^{**}$ chosen such that for all $x \geq x^{**}$, $\mu_k (1+\e)^k \e^{k-2} \leq \sigma^2 / 2$ and $(k+1) \mu_{k+1} \e \leq \frac{\mu_k}{2}$, 
\begin{equation*}
\begin{aligned}
    \sigma^2 q_x^2 &\geq \frac{\sigma^2}{2}q_x^2 + \mu_k (1+\e)^k \e^{k-2} q_x^2  \\ 
    &\geq \frac{\sigma^2}{2}q_x^2 + \mu_k (1+\e)^k q_x^k \\
    &\geq \frac{\sigma^2}{2}q_x^2 + \mu_k (q_x^+)^k \\
    &\geq \frac{\mu_k}{2}q_{x-1}^k.
\end{aligned}
\end{equation*}
Therefore
\begin{equation}\label{eq:karylowerb}
    q_x \geq \sqrt{\frac{\mu_k}{2\sigma^2}} q_{x-1}^{k/2}
\end{equation}
for all $x \geq x^{**}$.
The theorem statement is obtained by taking $x \geq \max\{x^*, x^{**}\}$ and combining the two estimates \eqref{eq:karyupperb} and \eqref{eq:karylowerb}.
\end{proof}

\paragraph{A related result} Cai and Devroye showed that the height $H_n$ of the maximal complete $k$-ary tree occurring as a terminal element in a critical Galton-Watson tree $T_n$ satisfies 
\[\frac{H_n}{\log_2 \log_2 n} \to \frac{1}{\log_2 k}\]
in probability (see Lemma 4.2, \cite{Cai2017}). These elements are called fringe subtrees. They also showed the same behaviour for the height $H_n'$ of the maximal complete $k$-ary non-fringe tree which is allowed to occur as a non-terminal element in $T_n$ (see Lemma 5.7, \cite{Cai2017}).

In this paper, we allow the complete $k$-ary tree to be embedded in $T_n$ rather than an element of it. We show that asymptotically, the height of the root is still a constant factor of $\log_2 \log_2 n$. The constant is now larger than in the case analyzed by Cai and Devroye: $(\log_2 k/2)^{-1}$ rather than $(\log_2 k)^{-1}$. 



\section*{Conclusion and Future Work}
\no 
In this work, we considered the setting of critical conditional Galton-Watson trees. We showed that their Horton-Strahler number scales as $\Theta(\log_2 n)$ in probability. This result was proven using the convergence of a conditional Galton-Watson tree to Kesten's limit tree, as well as the construction of a rotationally invariant event using the random walk view of a tree.

We then defined several other generalizations of the Horton-Strahler number to non-binary trees, including the rigid Horton-Strahler number and the $k$-ary register function. For the rigid Horton-Strahler case, we identify a key parameter $d$ denoting the first integer $i \geq 2$ for which the offspring distribution has nonzero probability of having $i$ children. We then used the same methods introduced earlier in the paper to prove that the $k$-ary register function and the rigid Horton-Strahler number both scale as $\Theta(\log_2 \log_2 n)$, respectively when $k \geq 3$ and $d \geq 3$. 

Our main result from sections~\ref{sec:lowerbound} and \ref{sec:upperbound} generalizes all previously known first order results for the regular Horton-Strahler number. However, higher order concentration information is not presented here. It seems plausible that the variance of $\HS(T_n)$ is $\O(1)$; such a result would be very desirable.

\section*{Acknowledgements}
\no 
Luc Devroye's research was supported by a Discovery Grant from NSERC. We would like to thank Konrad Anand, Marcel Goh, Jad Hamdan, Tyler Kastner, Gavin McCracken, Ndiam{\'e} Ndiaye, and Rosie Zhao for moral support, feedback and enlightening discussions. Special thanks to Marcel Goh for expert knowledge of the \TeX book.

\bibliographystyle{abbrv}
\bibliography{biblio}


\appendix

\section{Proofs for Unconditional Galton-Watson Trees}
\label{appendix:uncond}

\begin{proof}[Proof of Lemma~\ref{lem:removep1}] We will show that $q_i' \equiv \P\{\HS' = i\} = q_i$ for all $i \in \N$ via induction on $i$. \smallskip 

\no \textit{First, the base case}: for the $\zeta$ distribution, since $\P\{\zeta = 1\} = 0$, if the root has a non-zero number of children, it automatically has two or more children and a Horton-Strahler number greater or equal to one. Therefore
\[ q_0' = \P\{\zeta = 0\} = \frac{p_0}{1-p_1}. \]
For the $\xi$ distribution, we have $p_1 \neq 0$. Thus, for the root to have a Horton-Strahler number of zero, the tree must be a path, yielding
\[ q_0 = p_0 + p_1 p_0 + p_1^2 p_0 + \cdots = \frac{p_0}{1-p_1}.\]

\no \textit{Now suppose that $q_j' = q_j$ for all $j < i$.} For the root to have Horton-Strahler number $i > 0$, either it has one child with this same number, or it has $\ell \geq 2$ children. In the second case, there are two further possibilities, where either the Horton-Strahler number does not change from the maximal number of the root's children, or it increases by one, with $r \geq 2$ children with Horton-Strahler number $i-1$. 
We define a function $\psi(q_i, q_{i-1}^-, q_{i-1}, q_{i-2}^-, \{p_\ell\})$ to encapsulate the probability of the root having Horton-Strahler number $i$ given these two possibilities: 
\[\psi(q_i, q_{i-1}^-, q_{i-1}, q_{i-2}^-, \{p_\ell\}) =  \sum_{\ell=2}^\infty p_\ell \bigg(\ell q_i (q_{i-1}^-)^{\ell - 1} + \sum_{r = 2}^\ell \binom{\ell}{r} q_{i-1}^r (q_{i-2}^-)^{\ell - r} \bigg).\]
Then,
\begin{equation}
    q_i = p_1 q_i + \psi(q_i, q_{i-1}^-, q_{i-1}, q_{i-2}^-, \{p_\ell\}) 
\end{equation}
and since $q_i'$ has $\Pr\{\zeta = 1\} = 0$,
\begin{align}
    q_i' &= \psi\big(q_i', q_{i-1}'^-, q_{i-1}'^-, q_{i-2}', \{p_\ell/(1-p_1)\}\big)  \label{eq:probrecursion} \\ 
    &= \psi\big(q_i', q_{i-1}^-, q_{i-1}, q_{i-2}, \{p_\ell/(1-p_1)\}\big) \nonumber
\end{align}
where the second line was obtained from the inductive hypothesis. Thus, 
\[q_i\Big(1 - p_1 - \sum_{\ell=2}^\infty p_\ell \ell (q_{i-1}^-)^{\ell - 1} \Big) = \sum_{\ell=2}^\infty p_\ell  \sum_{r = 2}^\ell \binom{\ell}{r} q_{i-1}^r (q_{i-2}^-)^{\ell - r} \ceq \theta\]
and 
\[q_i' \Big(1 - \sum_{\ell=2}^\infty \frac{p_\ell}{1-p_1} \ell (q_{i-1}^-)^{\ell - 1} \Big) =  \theta \frac{1}{1-p}; \]
we have shown that $q_i' = q_i$. By induction, this thus holds for all $i \in \N$.
\end{proof}

\begin{proof}[Proof of Theorem~\ref{thm:uncond}]
By Theorem~\ref{lem:removep1}, we can without loss of generality assume that $p_1 = 0$. We have a recursion from \eqref{eq:probrecursion}: 
\[q_i = \sum_{\ell=2} p_\ell \bigg(\ell q_i (q_{i-1}^-)^{\ell - 1} + \sum_{r = 2}^\ell \binom{\ell}{r} q_{i-1}^r (q_{i-2}^-)^{\ell - r} \bigg).\]
Rearranging gives us
\[q_i\Big(1 - p_1 - \sum_{\ell=2}^\infty p_\ell \ell (q_{i-1}^-)^{\ell - 1} \Big) = \sum_{\ell=2}^\infty p_\ell  \bigg( \sum_{r = 0}^\ell \binom{\ell}{r} q_{i-1}^r (q_{i-2}^-)^{\ell - r} - \ell q_{i-1}^1 (q_{i-2}^-)^{\ell - 1} - (q_{i-2}^-)^\ell \bigg) \]
and we can use the binomial theorem and the definition of the generating function $f(s)$ of $\{p_i\}$ to obtain 
\begin{align*}
    q_i (1 - f'(q_{i-1}^-)) &= \sum_{\ell=2} p_\ell \bpar{(q_{i-1} + q_{i-2}^-)^\ell - \ell q_{i-1} (q_{i-2}^-)^{\ell - 1} - (q_{i-2}^-)^\ell } \\ 
    &= \sum_{\ell=2}^\infty p_\ell (q_{i-1}^-)^\ell - q_{i-1} \sum_{\ell=2}^\infty \ell p_\ell (q_{i-2}^-)^{\ell-1} - \sum_{\ell=2}^\infty p_\ell q_{i-2}^{-\ell} \\ 
    &= \left( f(q_{i-1}^-) - p_0 \right) - q_{i-1} f'(q_{i-2}^-) - \left( f(q_{i-2}^-) - p_0 \right) \\ 
    &= \left( f(q_{i-1}^-) - f(q_{i-2}^-) \right) - q_{i-1} f'(q_{i-2}^-).
\end{align*}
We thus have 
\begin{equation}\label{eq:qirecursiongenerating}
    q_i = \frac{ f(q_{i-1}^-) - f(q_{i-2}^-) - q_{i-1} f'(q_{i-2}^-)}{1 - f'(q_{i-1}^-)}.
\end{equation}
Now, consider the Taylor expansion of $f(s)$ near $s=1$. Then, 
\[f(s) = 1 + \alpha_1 (s-1) + \alpha_2 \frac{(s-1)^2}{2!} + \cdots,\]
where $\alpha_i$ is the $i$-th descending moment of $\xi$
\[\alpha_i = \E\{\xi(\xi-1)\cdots (\xi-i+1)\}.\]
In particular, we have $\alpha_1 = 1$ and $\alpha_2 = \sigma^2$.

Also, by Taylor's series with remainder, for some $\e \in [0,1],$ 
\[f(q_{i-1}^-) = f(q_{i-2}^-) + q_{i-1} f'(q_{i-2}^-) + \frac{q_{i-1}^2}{2} f''(q_{i-2}^- + \e q_{i-1}),\]
and \eqref{eq:qirecursiongenerating} becomes 
\begin{equation*}
    q_i = \frac{(q_{i-1}^2 / 2) f''(q_{i-2}^- + \e q_{i-1}) }{1 - f'(q_{i-1}}.
\end{equation*}
We further have, for some $\e' \in [0,1]$, 
\begin{align*}
    f'(q_{i-1}^-) &= f'(1) + (q_{i-1}^- - 1) f''(q_{i-1}^- + \e' q_i^+) \\ 
    &= 1 + (q_{i-1}^- -1) f''(q_{i-1}^- + \e' q_i^+).
\end{align*}
Thus, 
\[q_i = \frac{q_{i-1}^2}{2(1-q_{i-1}^-)} \cdot \frac{f''(q_{i-2}^- + \e q_{i-1})}{f''(q_{i-1}^- + \e' q_i^+)}.\]
Since $f''(s)$ is an increasing function, we have the inequalities 
\begin{equation*}
    \frac{q_{i-1}^2}{2q_i^+} \cdot \frac{f''(q_{i-2}^-)}{f''(1)} \leq q_i \leq \frac{q_{i-1}^2}{2q_i^+} \cdot \frac{f''(q_{i-1}^- )}{f''(q_{i-1}^-)} =  \frac{q_{i-1}^2}{2q_i^+}, 
\end{equation*}
where the ratio $f''(q_{i-2}^-) / f''(1)$ is near $1$ since $f''(1) = \sigma^2 \in (0, \infty)$ and $q_{i-2}^- \to 1$ as $i \to \infty$. Thus, for every $\e > 0$, there is some $n_0(\e)$, such that for all $i \geq n_0(\e)$, 
\[(1-\e) \frac{q_{i-1}^2}{2q_i^+} \leq q_i.\]
We thus have for all $i \geq n_0(\e)$,
\begin{equation}\label{eq:qiinequality}
    (1-\e) \frac{q_{i-1}^2}{2q_i^+} \leq q_i \leq \frac{q_{i-1}^2}{2q_i^+}.
\end{equation}
In the following, we will set $\e > 0$ and consider $i \geq n_0(\e)$. 
First, note that from \eqref{eq:qiinequality}, $q_i^2 \leq q_i q_i^+ \leq q_{i-1}^2/2$, so 
\[q_i \leq q_{i-1} / \sqrt{2}.\]
Our result will follow from the fact that if we have $q_i \leq q_{i-1}\cdot \gamma$ for some $\gamma < 1$, then 
\begin{equation}\label{eq:qiratiotobeproven}
\frac{1}{2} - h(\e) \leq \frac{q_i}{q_{i-1}} \leq \frac{1}{2} + g(\e)
\end{equation}
for positive functions $h$ and $g$ with $\lim_{\e \to 0} h(\e) = \lim_{\e\to 0} g(\e) = 0$. This will give us that, for $i \geq n_0(\e)$, 
\begin{equation*}
    q_{n_0(\e)} \bpar{\frac{1}{2} - h(\e)}^{i-n_0(\e)} \leq q_i \leq q_{n_0(\e)} \bpar{\frac{1}{2} + g(\e)}^{i-n_0(\e)},
\end{equation*}
which completes the proof, as $\e> 0$ was chosen arbitrarily. 

\medskip 
Let's now show \eqref{eq:qiratiotobeproven}. From $q_i \leq q_{i-1}\gamma$, we have that 
\[q_i^+ \leq q_i(1 + \gamma + \gamma^2 + \cdots) = \frac{q_i}{1-\gamma}.\]
Then, we have 
\[q_i \geq (1-\e) \frac{q_{i-1}^2}{2q_i^+} \geq (1-\e)(1-\gamma) \frac{q_{i-1}^2}{2q_i},\]
giving us 
\begin{equation}\label{eq:qiratiolowerbound}
    q_i \geq q_{i-1} \sqrt{ \frac{(1-\e)(1-\gamma)}{2}} .
\end{equation}
For the upper bound, we have that
\[q_i^+ \geq q_i \bpar{ 1 + \sqrt{ \frac{(1-\e)(1-\gamma)}{2}} + \cdots } = \frac{q_i}{1 - \sqrt{ \frac{(1-\e)(1-\gamma)}{2}}}, \]
and thus 
\[q_i \leq \frac{q_{i-1}^2}{2q_i^+} \leq \frac{q_{i-1}^2}{2q_i}\bpar{1 - \sqrt{ \frac{(1-\e)(1-\gamma)}{2}}}.\]
Similarly to the lower bound, this gives us 
\[q_i \leq q_{i-1} \sqrt{\frac{1 - \sqrt{(1-\e)(1-\gamma)/2} }{2}}.\]
Now consider the map $\gamma \mapsto \sqrt{\frac{1 - \sqrt{(1-\gamma)/2}}{2}}$; it has a fixed point at $\gamma = 1/2$ since $\sqrt{(1 - \sqrt{1/4})/2} = 1/2$. More precisely, let $\gamma$  be the solution of 
\[\gamma = \sqrt{\frac{1 - \sqrt{(1-\e)(1 - \gamma)/2}}{2}}.\]
Then $\gamma = 1/2 + g(\e)$ for some $g(\e) > 0$, $g(\e) \to 0$ as $\e \to 0$. Therefore, recalling the lower bound \eqref{eq:qiratiolowerbound}, we have for all $i \geq n_0(\e)$, 
\[\sqrt{\frac{(1-\e)(1-\gamma)}{2}} = \frac{1}{2}\sqrt{(1-\e)(1 - 2g(\e))} \leq \frac{q_i}{q_{i-1}} \leq \frac{1}{2} + g(\e),\]
and we have shown \eqref{eq:qiratiotobeproven}. 
\end{proof}

\section{Proofs for Alternate Horton-Strahler Numbers}
\label{appendix:alternateHSproofs}
\begin{proof}[Proof of Lemma~\ref{lem:HSnumsordering}] 
We proceed by induction on the height of the tree to show \eqref{eq:HSnumsInequalities}. Consider a tree $T$ with $k$ children, and consider all the required orderings of the French, Canadian, standard and rigid Horton-Strahler numbers ($\Fr_i,\ \Can_i,\ \HS_i$ and $\Rig_i$ for $i = 1, \dots, k$) of these children. Note that for a leaf node with subtree size $|T| = 1$, the base case holds: $\Fr(T) = \Can(T) = \HS(T) = \Rig(T) = 0$.
\begin{enumerate} 
    \item To show $\Fr(T) \geq \Can(T)$, suppose that for each $1 \leq i \leq k$ children, $\Can_i \leq \Fr_i$. Suppose $\Can_1 = \cdots = \Can_r$ for some $r \in \{1, \dots, k\}$. Then, $\Can = \Can_r + (r - 1) \leq \Fr_r + (r-1) \leq \Fr$, and we are done.
    
    \item To show $\Can(T) \geq \HS(T)$, suppose that for each $1 \leq i \leq k$ children, $\HS_i \leq \Can_i$. Then, in the case where $\Can(T) > \Can_1$, we are done, as $\HS(T) \leq \HS_1 + 1 \leq \Can_1 + 1 \leq \Can(T)$. Otherwise, $\Can(T) = \Can_1$ and $r = 1$, so we have the strict ordering $\Can_1 > \Can_2 \geq \cdots$. This leads to the two cases:
    \begin{itemize}
        \item if $\HS_1 < \Can_1$, then we are again done since $\HS(T) \leq \HS_1 + 1$.
        \item otherwise, $\HS_1 = \Can_1 > \Can_2 \geq \HS_2$, so $\HS(T) = \HS_1 \leq \Can_1 = \Can(T)$, as required.
    \end{itemize}
    
    \item To show $\HS(T) \geq \Rig(T)$, suppose that for each $1 \leq i \leq k$ children, $\Rig_i \leq \HS_i$. We can proceed the same way as in (ii). If $\HS(T) > \HS_1$, then we are done, as $\Rig(T) \leq \Rig_1 + 1 \leq \HS_1 + 1 = \HS(T)$. Otherwise, $\HS(T) = \HS_1$ and we have $\HS_1 > \HS_2 \geq \cdots$; there are two cases:
    \begin{enumerate}
        \item if $\Rig_1 < \HS_1$, then we are done, as $\Rig(T) \leq \Rig_1 + 1$.
        \item otherwise, $\Rig_1 = \HS_1 > \HS_2 \geq \Rig_2$, and thus $\Rig(T) = \Rig_1 = \HS_1 = \HS(T)$.
    \end{enumerate}
\end{enumerate}
All of these were shown at the root. Thus, the inequality holds by induction.
\end{proof}

\section{Proofs for the Rigid Horton-Strahler Number}\label{appendix:rigid}
\begin{proof}[Proof of Lemma~\ref{lem:removep1rigid}] We prove this lemma by induction. Define $q_i$ and $q_i'$ as 
\begin{align*}
    q_i &= \P\{\Rig(T) = i\} \\ 
    q_i' &= \P\{\Rig'(T) = i\},
\end{align*}
and recall the definitions of $q_i^-$ and $q_i^+$ offered at the start of section~\ref{sec:unconditional}.
Further recall that we defined $p_i = \P\{\xi = i\}$ and $p_i' = \P\{\zeta = i\}$, with $p_1' = 0$ and
\[p_i' = \frac{p_i}{1-p_1}\]
for $i \neq 1$.
\smallskip 

\no \textit{The base case}: we have by the same argument as in the proof of Lemma~\ref{lem:removep1} that 
\[q_0' = \frac{p_0}{1-p_1}\] 
and 
\[q_0 = p_0 + p_1p_0 + p_1^2p_0 + \dots = \frac{p_0}{1-p_1}.\]

\no \textit{Then suppose for induction that $q_j' = q_j$ for all $j < i$.} For the root to have Horton-Strahler number $i > 0$, either it has one child with the same number, or $\ell > 2$ children. In this second case, either all children have the same Horton-Strahler number $i-1$, or some number $r \in \{1, \ell - 1\}$ of children have Horton-Strahler number $i$. Defining $\psi(q_i, q_{i-1}, q_{i-1}^-, \{p_\ell\})$ as 
\begin{equation*}
    \psi(q_i, q_{i-1}, q_{i-1}^-, \{p_\ell\}) = \sum_{\ell = 2}^\infty p_\ell \Big( q_{i-1}^\ell + \sum_{r = 1}^{\ell - 1} \binom{\ell}{r} q_i^r (q_{i-1}^-)^{\ell - r} \Big),
\end{equation*}
we have 
\begin{equation}
    q_i = p_1 q_i + \psi(q_i, q_{i-1}, q_{i-1}^-, \{p_\ell\}).
\end{equation}
Then, we have that for the modified distribution,
\begin{equation*}
    \begin{aligned}
    q_i' &= \psi(q_i', q_{i-1}', (q_{i-1}^-)', \{p_i'\})\\ 
    &= \psi(q_i, q_{i-1}, q_{i-1}^-, \{p_\ell'\})
    \end{aligned}
\end{equation*}
from the inductive hypothesis. Then, we note that since only terms $p_i$ for $i \geq 2$ are involved, 
\begin{align*}
   q_i' &= \frac{1}{1-p_1} \psi(q_i, q_{i-1}, q_{i-1}^-, \{p_\ell\}) \\ 
   &= q_i,
\end{align*}
completing the proof.
\end{proof}

\begin{proof}[Proof of Theorem~\ref{thm:uncondrigid}]
Let $q_k = \P\{\Rig(T) = k\}$. We once again assume by Lemma~\ref{lem:removep1rigid} that $p_1 = 0$. Note that $\sigma^2$ will be involved in the proof and the results, and when the offspring distribution is changed from $\xi$ to $\zeta$ as in Lemma~\ref{lem:removep1rigid}, the standard deviation changes by a factor of $(1-p_1)^{-1}$. 
However, we will find that the final form of the result is such that this change in distribution does not matter. 

Using the same notation as in the proof of Theorem~\ref{thm:uncond}, we have that
\[q_0 = \P\{\R(T) = 0\} = p_0\] 
and \vspace{-.5\bs} 
\begin{align}
    q_k &= \sum_{\ell = d}^\infty p_\ell \Big( q_{k-1}^\ell + \sum_{r = 1}^{\ell - 1} \binom{\ell}{r} q_k^r (q_{k-1}^-)^{\ell - r} \Big) \nonumber \\ 
    &= (f(q_{k-1}) - p_0) + \sum_{\ell = d}^\infty p_\ell \bpar{(q_k + q_{k-1}^-)^\ell - q_k^\ell - (q_{k-1}^-)^\ell } \nonumber \\ 
    &= f(q_{k-1}) - f(q_k) + f(q_k^-) - f(q_{k-1}^-). \label{eq:rigidstarteq}
\end{align}
By the Taylor series with remainder, for some $\theta, \theta', \theta'' \in [0,1]$, we have approximations of the terms in \eqref{eq:rigidstarteq}:
\begin{equation}\label{eq:rigidtaylorapprox}
\begin{aligned}
    f(q_k) &= p_0 + \frac{1}{d!} q_k^d f^{(d)}(\theta' q_k), \\ 
    f(q_{k-1}) &= p_0 + \frac{1}{d!} q_{k-1}^d f^{(d)}(\theta'' q_k) \\
    f(q_k^-) - f(q_{k-1}^-) &= q_k f'(q_{k-1}^-) + \frac{q_k^2}{2!} f''(q_{k-1}^- + \theta q_k).
\end{aligned}
\end{equation}
Recall that $q_k \to 0$ as $k\to \infty$ and $f^{(i)} (0) = i! p_i$ for all $i \geq 0$. Then, since $f$ and all its derivatives are continuous, increasing and convex on $[0,1]$, for any $\e > 0$, there is some $n_0(\e)$ such that for all $k \geq n_0(\e)$, for all $r \geq 1$,
\begin{equation*}
    p_r \leq \frac{1}{r!} f^{(r)}(q_k) \leq p_r (1+\e).
\end{equation*}
Furthermore, since $f'(1) = 1$ and $f''(1) = \sigma^2$, we also have
\begin{align*}
    1 - \e &\leq f'(q_{k-1}) \leq 1 \\ 
    \sigma^2 (1-\e) &\leq \ f''(q_{k}^-) \ \leq \sigma^2. 
\end{align*}
These two facts can be used to simplify respectively the first two and the third equations in \eqref{eq:rigidtaylorapprox}. Then, plugging the terms back into our original form \eqref{eq:rigidstarteq} gives an upper bound for all $k \geq n_0(\e)$ of
\begin{equation}
    q_k \leq \frac{(q_{k-1}^d - q_k^d )p_d + \e q_{k-1}^d p_d + q_k^2 \sigma^2 / 2 }{1 - f'(q_{k-1}^-)}.
\end{equation}
Furthermore, 
\[ f'(q_{k-1}^-) = f'(1) - (1-q_{k-1}^-) f''(1-\de q_k^+)\]
for some $\de \in [0,1]$, yielding 
\[ 1 - q_k^+ \sigma^2 \leq f'(q_{k-1}^-) \leq 1 - q_k^+ \sigma^2 (1-\e).\]
We thus have 
\begin{equation}\label{eq:rigidupperbound}
    \begin{aligned}
        q_k &\leq  \frac{(q_{k-1}^d - q_k^d) p_d}{q_k^+ \sigma^2 (1-\e)} +  \frac{\e q_{k-1}^d p_d}{q_k^+ \sigma^2 (1-\e)} + \frac{q_k^2}{2q_k^+ (1-\e)}\\ 
        &\leq \frac{(q_{k-1}^d - q_k^d) p_d}{q_k \sigma^2 (1-\e)} +  \frac{\e q_{k-1}^d p_d}{q_k \sigma^2 (1-\e)} + \frac{q_k}{2(1-\e)} ,
    \end{aligned}
\end{equation}
which yields 
\[\frac{q_k^2 \sigma^2}{2} \leq q_{k-1}^d p_d \frac{1+\e}{1-\e}\]
and 
\begin{equation}
    q_k \leq \sqrt{\frac{2p_d}{\sigma^2}\bpar{\frac{1+\e}{1-\e}}} q_{k-1}^{d/2}.
\end{equation}

We must now distinguish between the two cases $d=2$ and $d\geq 3$ as stated in the theorem. We begin with the case $d \geq 3$. In this case since $\sum_i ip_i = 1$, $p_d \leq 1/d$ and $\sigma^2 \geq 1$, thus, 
\[ \frac{2p_d}{\sigma^2} \leq \frac{2}{d}.\] 
Nothing that $\e$ was arbitrary, we can pick $\e = 1/8$ such that for all $d \geq 3$, $k \geq n_0(1/8)$, 
\begin{equation}\label{eq:rigidupperboundresult}
    q_k \leq \sqrt{\frac{6}{7}} q_{k-1}^{d/2},
\end{equation}
and thus the upper bound follows for $d \geq 3$. 

For the lower bound in the $d \geq 3$ case, we can obtain from \eqref{eq:rigidstarteq} similarly to the upper bound case that 
\begin{equation}\label{eq:rigidlowerbound}
\begin{aligned}
    q_k &\geq \frac{(q_{k-1}^d - q_k^d)p_d - \e q_{k-1}^d p_d + q_k^2 \sigma^2 (1-\e)/2 }{1 - f'(q_{k-1}^-)} \\ 
    &\geq \frac{(1-\e)q_{k-1}^d p_d - q_{k-1}^d p_d + q_k^2 \sigma^2 (1-\e)/2 }{q_k^+ \sigma^2 }.
\end{aligned}
\end{equation}
Using \eqref{eq:rigidupperboundresult}, we can bound $q_k^dp_d$ by 
\[q_k^d p_d \leq \bpar{\frac{6}{7}}^{3/2} q_{k-1}^d p_d 
\]
and $q_k^+$ by
\[
    q_k^+ \leq q_k \Big(\sum_{n=0}^\infty \sqrt{6/7}^n \Big) = \frac{1}{1-\sqrt{6/7}}q_k < 14 q_k.
\]
These bounds give
\[ 
    q_k \geq \frac{\big(1 - \e - (6/7)^{3/2}\big) q_{k-1}^d p_d + q_k^2 \sigma^2 (1-\e)/2 }{14 \sigma^2 q_k} 
\]
i.e., 
\[
    q_k^2 \bpar{14 - \frac{1-\e}{2}} \sigma^2 \geq q_{k-1}^d p_d \bpar{1- \e - (6/7)^{3/2} }
\] 
and
\[
    q_k^2 \geq \bpar{\frac{2p_d}{\sigma^2} \frac{1- \e - (6/7)^{3/2}}{27 + \e}} q_{k-1}^d. 
\]
We can then choose $\e > 0$ such that
\[
    q_k^2 \geq \frac{2p_d}{\sigma^2} \frac{1}{162} q_{k-1}^d, 
\]
and thus, for all $k \geq n_0(\e)$,
\begin{equation}
    q_k \geq \frac{1}{9}\sqrt{\frac{p_d}{\sigma^2}} q_{k-1}^{d/2}.
\end{equation}
The lower bound follows from this. 
\smallskip

Finally, consider the case $d = 2$. From \eqref{eq:rigidupperbound}, setting $\alpha = 2p_2 / \sigma^2$, 
\[ 
    q_k \leq \frac{q_k^2}{2 q_k^2 (1 - \e) } + \frac{(q_{k-1}^2 - q_k^2) \alpha }{2 q_k^+ (1 - \e)} + \frac{\e q_{k-1}^2 \alpha }{2 q_k^+ (1-\e)}, 
\]
and thus 
\[ 
    2 q_k q_k^+ (1-\e) \leq q_k^2 (1-\alpha) + q_{k-1}^2 \alpha (1+\e). 
\]
Similarly, for the lower bound, we have from \eqref{eq:rigidlowerbound} that 
\[
    q_k \geq \frac{(q_{k-1}^2 - q_k^2) p_2 - \e q_{k-1}^2 p_2 + q_k^2 \sigma^2 (1-\e) / 2 }{q_k^+\sigma^2} 
\]
and thus 
\[
    2q_k q_k^+ \geq q_k^2 (1-\alpha - \e) + q_{k-1}^2 \alpha (1-\e).
\]
Letting $\gamma = q_k / q_{k-1}$, we know that $\gamma \leq 1$. We then have $q_{k-1}^2 = \gamma^{-2} q_k^2$ and $q_k^+ = (1-\gamma)^{-1} q_k$. Disregarding epsilons, the upper and lower bounds above coincide, giving
\[ 
    2q_k q_k^+ = \frac{2}{1-\gamma} q_k^2 =  (1-\alpha) q_k^2 + \frac{\alpha}{\gamma^2} q_k^2,
\]
which can be written as a simple equation
\begin{equation*}
    \frac{2}{1-\gamma} = 1 - \alpha + \frac{\alpha}{\gamma^2}
\end{equation*}
with solution $\gamma = \frac{\sqrt{\alpha}}{\sqrt{\alpha} + 1}$. Thus, similarly to~\eqref{eq:qiratiotobeproven} in the proof of Theorem~\ref{thm:uncond}, we have that 
\begin{equation}
    \frac{1}{1 - \sqrt{{\sigma^2}/{2p_2}}} - h(\e) \leq \frac{q_k}{q_{k-1}} \leq \frac{1}{1 - \sqrt{{\sigma^2}/{2p_2}}} + g(\e)
\end{equation}
for positive function $h$ and $g$ with $\lim_{\e \to 0} h(\e) = \lim_{\e \to 0} g(\e) = 0$. This proves the statement.
\medskip 

Note that it was safe to assume that $p_1 = 0$: when changing the distribution from $\xi$ to $\zeta$, both $\sigma^2$ and $p_2$ pick up a factor of $(1-p_1)^{-1}$, resulting in no net change in the ratio.
\end{proof}

\end{document}